\theoremstyle{plain}
\newtheorem{theorem}{Theorem}[section]
\newtheorem{remark}{Remark}[section]
\newtheorem{proposition}{Proposition}[section]
\theoremstyle{definition}
\theoremstyle{remark}
\long\def\symbolfootnote[#1]#2{\begingroup
	\def\thefootnote{\fnsymbol{footnote}}\footnote[#1]{#2}\endgroup}
\begin{document}

\title[The behavior of solutions of a weighted $(p,q)$-Laplacian equation]
	{The behavior of solutions of a parametric\\ weighted $(p,q)$-Laplacian equation}
	\author[D.D. Repov\v{s} and C. Vetro]{Du\v{s}an D. Repov\v{s} and Calogero Vetro}
	\address[D. D. Repov\v{s}]{Faculty of Education and Faculty of Mathematics and Physics,
		University of Ljubljana \& Institute of Mathematics, Physics and Mechanics,
		SI-1000, Ljubljana, Slovenia}
	\email{dusan.repovs@guest.arnes.si}
	\address[C. Vetro]{Department of Mathematics and Computer Science, University of Palermo, Via Archirafi 34, 90123, Palermo, Italy}
	\email{calogero.vetro@unipa.it}
	
	\thanks{{\em 2010 Mathematics Subject Classification:} 35J20, 35J60.}

	\keywords{Weighted $(p,q)$-Laplacian, resonant Carath\'eodory function, parametric power term, positive and negative solutions, nodal solutions.}
	\date{}
	\maketitle

\begin{abstract}
	We study the behavior of solutions for the parametric equation
$$-\Delta_{p}^{a_1} u(z)-\Delta_{q}^{a_2} u(z)=\lambda |u(z)|^{q-2} u(z)+f(z,u(z))  \quad \mbox{in } \Omega,\, \lambda >0,$$
under Dirichlet condition, where	$\Omega \subseteq \mathbb{R}^N$ is a bounded domain with a $C^2$-boundary $\partial \Omega$,  $a_1,a_2 \in L^\infty(\Omega)$ with $a_1(z),a_2(z)>0$ for a.a. $z \in \Omega$, $p,q \in (1,\infty)$ and  $\Delta_{p}^{a_1},\Delta_{q}^{a_2}$ are weighted versions of $p$-Laplacian and $q$-Laplacian. We prove existence and nonexistence of nontrivial solutions, when $f(z,x)$  asymptotically as $x \to \pm \infty$ can be resonant. In the studied cases, we adopt a variational approach and use truncation and comparison techniques. When $\lambda$ is large, we establish the existence of at least three nontrivial smooth solutions with sign information and ordered. Moreover, the critical parameter value is determined in terms of the spectrum of one of the differential operators.
\end{abstract}

\maketitle

\section{Introduction}\label{sec:1}
Our goal here is to investigate the existence and nonexistence of nontrivial smooth solutions for  the following parametric Dirichlet problem
\begin{equation}\label{eq0}\tag{$P_\lambda$} 
	\begin{cases}-\Delta_{p}^{a_1} u(z)-\Delta_{q}^{a_2} u(z)=\lambda |u(z)|^{q-2} u(z)+f(z,u(z))  & \mbox{in } \Omega,\\ u \Big|_{\partial \Omega} =0, \, 1<q<p, \,  \lambda>0,& \end{cases}
\end{equation}
where $\Omega \subseteq \mathbb{R}^N$ is a bounded domain with a $C^2$-boundary $\partial \Omega$. Given $r \in (1,\infty)$ and $a \in L^\infty(\Omega)$ with $a(z)>0$ for a.a. $z \in \Omega$, by   $\Delta_{r}^a$ we mean the weighted $r$-Laplacian of the form
$\Delta_{r}^a u= \mbox{div }(a(z)|\nabla u|^{r-2}\nabla u)$ for all $u \in W_0^{1,r}(\Omega)$. Thus, \eqref{eq0} is driven by the operator $-\Delta_{p}^{a_1} -\Delta_{q}^{a_2}$, whose weights $a_1,a_2$ are Lipschitz continuous, positive and bounded away from zero. These conditions imply that the integrand corresponding to this differential operator, exhibits balanced growth. However, the fact that the two weights are different, does not allow the use of the nonlinear strong maximum principle (see Pucci and Serrin \cite{Ref21}, pp. 111, 120). Instead we use a recent result of Papageorgiou et al. \cite{Ref18}, together with an additional comparison argument, which allows us to conclude that the constant sign solutions of the problem satisfy the nonlinear Hopf's lemma. The right-side of \eqref{eq0} is the sum of the power term $\lambda |x|^{q-2} x$ and of the Carath\'eodory function $f(z,x)$. The $\lambda$-parametric term is $(p-1)$-sublinear (recall that $q<p$), and $f(z,x)$  is $(p-1)$-linear as $x \to \pm \infty$  and can be resonant with respect to the first eigenvalue of  ($-\Delta_p^{a_1},W_0^{1,p}(\Omega)$). We mention that the power of the parametric term (namely $q$) is the same with the exponent of the second differential operator $-\Delta_q^{a_2}$. This distinguishes \eqref{eq0} from problems with concave terms, where the power of the parametric term is strictly less than the exponents of all the differential operators in the left-side. Such concave problems, were studied recently by Gasi\'{n}ski and Papageorgiou \cite{Ref5},  Gasi\'{n}ski et al. \cite{Ref6} ($p$-equations),  Marano et al. \cite{Ref12}, Papageorgiou and \ Scapellato \cite{Ref17ter}, Papageorgiou and Zhang \cite{Ref20} ($(p,2)$-equations),  Papageorgiou et al. \cite{Ref17} (anisotropic equations), Papageorgiou and Winkert  \cite{Ref19}, Papageorgiou and Zhang \cite{Ref20b,Ref20c} ($(p,q)$-equations) and Papageorgiou et al. \cite{Ref17bis} (nonhomogeneous Robin problems).

Let $\widehat{\lambda}_1(q,a_2)>0$ be the principal eigenvalue of ($-\Delta_q^{a_2},W_0^{1,q}(\Omega)$).  Using variational tools from the critical point theory, truncation and comparison methods, then   \eqref{eq0} (for all $\lambda>\widehat{\lambda}_1(q,a_2)$) admits at least three nontrivial smooth solutions (positive, negative, nodal). Moreover, under an additional mild regularity for $f(z,\cdot)$, we get that   \eqref{eq0} (for all $\lambda<\widehat{\lambda}_1(q,a_2)$) has no nontrivial   solutions.

\section{Preliminaries}\label{sec:2}

A crucial point is to
establish the appropriate spaces, where carrying out the study. Here,  \eqref{eq0} is analyzed in  $ W^{1,p}_0(\Omega)$ (namely, Sobolev space) and in $C_0^1(\overline{\Omega})=\{u \in C^1(\overline{\Omega})  :   u \big|_{\partial \Omega}=0\}$ (classical Banach space). Additionally, $\|\cdot\|$ means the norm of $ W^{1,p}_0(\Omega)$ with
$$\|u \| = \|\nabla u\|_{p}\quad \mbox{for all } u \in W_0^{1,p}(\Omega)\quad \mbox{(by Poincar\'e inequality).}$$

$C_0^1(\overline{\Omega})$ is  ordered, with positive (order) cone 
$C_+=\left\{u \in C_0^1(\overline{\Omega})   :  u(z) \geq 0 \mbox{ for all } z \in \overline{\Omega}\right\}$. Now, $C_+$ has the nonempty interior
$$\mbox{int }C_+=\left\{u \in C_+   :  u(z) > 0  \mbox{ for all } z \in  \Omega , \quad \frac{\partial u}{\partial n} \Big|_{\partial \Omega}<0\right\},$$
with $n(\cdot)$ being the outward unit normal on $\partial \Omega$.  Let $r \in (1,\infty)$ and $ a\in C^{0,1}(\overline{\Omega})$ (that is, $a(\cdot)$ is Lipschitz continuous on $\overline{\Omega}$) with $a(z)\geq \widehat{c}_0> 0$ for all $z \in \overline{\Omega}$.

By $A_{r}^a: W_0^{1,r}(\Omega) \to  W^{-1,r^\prime }(\Omega)=W_0^{1,r}(\Omega)^\ast$ ($\frac{1}{r}+\frac{1}{r^\prime}=1$), we denote the  operator 
$$\langle A_{r}^a(u),h \rangle = \int_\Omega a(z)|\nabla u|^{r-2}(\nabla u, \nabla h)_{\mathbb{R}^N}dz \quad \mbox{for all }  u,h \in W_0^{1,r}(\Omega).$$

We recall some features of $A_{r}^a(\cdot)$ as follows:
\begin{itemize}
	\item  $A_{r}^a(\cdot)$ is bounded and continuous;
	\item $A_{r}^a(\cdot)$ is strictly monotone, and hence maximal monotone;
	\item $A_{r}^a(\cdot)$ is of type $(S)_+$. It means that, if $u_n  \xrightarrow{w} u $ in $W_0^{1,r}(\Omega)$ and $\limsup\limits_{n \to \infty} \langle A_{r}^a(u_n), u_n - u \rangle \leq 0$, then $u_n \to u$ in $W_0^{1,r}(\Omega).$ 
\end{itemize}

Given the eigenvalue problem 
$$ -\Delta_r^a u(z) =\widehat{\lambda}|u(z)|^{r-2}u(z) \mbox{ in $\Omega$, } \, u \Big|_{\partial \Omega}=0, 
$$
we say that $\widehat{\lambda} \in \mathbb{R}$ is an eigenvalue of $(-\Delta_r^a,W_0^{1,r}(\Omega))$, if the above problem admits a nontrivial solution $\widehat{u} \in W_0^{1,r}(\Omega)$ (namely,  eigenfunction of $\widehat{\lambda}$). There is a smallest eigenvalue $\widehat{\lambda}_1(r,a)>0$. Indeed, consider
\begin{align}
	0 \leq \widehat{\lambda}_1(r,a) & = \inf \left[\frac{\int_\Omega a(z)|\nabla u|^r dz}{\|u\|_r^r} : u  \in W_0^{1,r}(\Omega), \, u \neq 0  \right] \nonumber \\ & = \inf \left[ \int_\Omega a(z)|\nabla u|^r dz : u  \in W_0^{1,r}(\Omega), \, \|u\|_r=1  \right]. \label{eq1}	
\end{align}

We claim that the infimum in \eqref{eq1} is attained. To see this consider a sequence $\{u_n\}_{n \in \mathbb{N}} \subseteq W_0^{1,r}(\Omega)$ satisfying
$\|u_n\|_r=1$ for all $n \in \mathbb{N}$, and $\int_\Omega a(z)|\nabla u_n|^r dz \, \downarrow \, \widehat{\lambda}_1(r,a)$. From the boundedness of $\{u_n\}_{n \in \mathbb{N}} \subseteq W_0^{1,r}(\Omega)$, it is possible to suppose
\begin{equation}
	\label{eq2} u_n \xrightarrow{w} \widehat{u}_1 \mbox{ in $W_0^{1,r}(\Omega)$, $u_n \to \widehat{u}_1$ in $L^r(\Omega)$.}
\end{equation}

On account of our hypothesis on the weight $a(\cdot)$, on $L^r(\Omega,\mathbb{R}^N)$ $y \to \left[  \int_\Omega a(z)|y|^rdz\right]^{1/r}$ is an equivalent norm. From \eqref{eq2}, since the norm  (in Banach space)  is weakly lower semicontinuous, also using the Lagrange multiplier rule (Papageorgiou and  Kyritsi-Yiallourou \cite{Ref14}, p. 76) and the nonlinear regularity theory, after standard calculations we get $\widehat{u}_1  \in  C_0^1(\overline{\Omega})\setminus \{0\}$. Additionally, it is clear from \eqref{eq1} that we may assume that $\widehat{u}_1  \in  C_+\setminus \{0\}$ (just replace $\widehat{u}_1$ by $|\widehat{u}_1|$). Then the nonlinear Hopf's lemma (Pucci and Serrin \cite{Ref21}, pp. 111, 120), gives us $\widehat{u}_1=\widehat{u}_1(r,a)\in {\rm int \,}C_+$. From Jaros \cite[Theorem 3.3]{Ref9}, we know that 
$\widehat{\lambda}_1(r,a) \mbox{ is simple,}$
i.e., if $\widehat{u}_1,\widehat{v}_1$ are eigenfunctions corresponding to $\widehat{\lambda}_1$, then $\widehat{u}_1=\vartheta \,\widehat{v}_1$ for certain $\vartheta \in \mathbb{R} \setminus \{0\}$. Also $\widehat{\lambda}_1(r,a)>0$ is isolated in the spectrum $\sigma(r,a)$ of $(-\Delta_r^a,W_0^{1,r}(\Omega))$.  For this purpose, let us consider eigenvalues  $\{\widehat{\lambda}_n \}_{n \in \mathbb{N}}\subseteq \sigma(r,a)$ satisfying
$\widehat{\lambda}_1(r,a)<\widehat{\lambda}_n$ for all $n \in \mathbb{N}$, and $\widehat{\lambda}_n \, \downarrow \, \widehat{\lambda}_1(r,a).$
So, we can find $\widehat{u}_n \in W_0^{1,r}(\Omega)$, $\widehat{u}_n \neq 0$ such that 
$$ -\Delta_r^a \widehat{u}_n =\widehat{\lambda}_n|\widehat{u}_n|^{r-2}\widehat{u}_n \mbox{ in $\Omega$, } \, \widehat{u}_n\Big|_{\partial \Omega}=0, \, n \in \mathbb{N}. 
$$

By homogeneity we can always assume that $\|\widehat{u}_n\|_r=1$ for all  $n \in \mathbb{N}$. The nonlinear regularity theory (see Lieberman \cite{Ref11}), implies that there exist $\alpha \in (0,1)$ and $c_0>0$ such that
\begin{equation}
	\label{eq3} \widehat{u}_n \in C_0^{1,\alpha}(\overline{\Omega}), \, \|\widehat{u}_n\|_{C_0^{1,\alpha}(\overline{\Omega})}\leq c_0 \quad \mbox{for all } n \in \mathbb{N}.
\end{equation}

The compact embedding $C_0^{1,\alpha}(\overline{\Omega}) \hookrightarrow C_0^{1}(\overline{\Omega})$  and \eqref{eq3}, ensure one can suppose
\begin{align*}
	& u_n \to \widetilde{u} \mbox{ in }C_0^{1}(\overline{\Omega}), \, \|\widetilde{u}\|_r=1,\\
	\Rightarrow \quad & 	-\Delta_r^a \widetilde{u} =\widehat{\lambda}_1(r,a)|\widetilde{u}|^{r-2}\widetilde{u} \mbox{ in $\Omega$, } \, \widetilde{u}\Big|_{\partial \Omega}=0, \\
	\Rightarrow \quad & \widetilde{u}= \vartheta \, \widehat{u}_1 \in {\rm int \,}C_+\quad \mbox{for some $\vartheta>0$,}
\end{align*}
and hence $\widehat{u}_n \in {\rm int \,}C_+$  for all $n \geq n_0$,
which leads to contradiction with Jaros \cite[Corollary 3.2]{Ref9}. This proves that $\widehat{\lambda}_1(r,a)>0$ is isolated. The Ljusternik-Schnirelmann minimax scheme (see, for example, Gasi\'{n}ski and  Papageorgiou \cite{Ref3}), ensures a whole strictly increasing sequence of distinct eigenvalues $\{\widehat{\lambda}_n\}_{n \in \mathbb{N}}$ such that $\widehat{\lambda}_n \to +\infty$. If $r=2$, then these eigenvalues exhaust the spectrum. If $r \neq 2$, then it is not known if the LS-eigenvalues fully describe $\sigma(r,a)$. Moreover, every $\widehat{\lambda}\in \sigma(r,a)\setminus \{ \widehat{\lambda}_1(r,a)\}$ has eigenfunctions which are nodal functions (that is, sign-changing functions), see again Jaros \cite[Corollary 3.2]{Ref9}. We can easily check that $\sigma(r,a)\subseteq [\widehat{\lambda}_1(r,a),+\infty)$ is  closed. So, we can define the second eigenvalue of $(-\Delta_r^a,W_0^{1,r}(\Omega))$ by 
$$\widehat{\lambda}_2(r,a)=\inf [\widehat{\lambda} \in \sigma(r,a) : \widehat{\lambda}_1(r,a)<\widehat{\lambda}].$$

Reasoning as in Cuesta et al. \cite{Ref1}, one can show that $\widehat{\lambda}_2(r,a)$ corresponds to the second LS-eigenvalue and 
\begin{equation}
	\label{eq4} \widehat{\lambda}_2(r,a)=\inf\limits_{\widehat{\gamma} \in \widehat{\Gamma}}\max\limits_{-1 \leq t \leq 1}\int_\Omega a(z) |\nabla \widehat{\gamma}(t)|^rdz, 
\end{equation}
where $\widehat{\Gamma}=\{\widehat{\gamma} \in C([-1,1],M)  :  \widehat{\gamma}(-1)=-\widehat{u}_1(r,a), \,\widehat{\gamma}(1)=\widehat{u}_1(r,a) \}$ with $M= W_0^{1,r}(\Omega) \cap \partial B_1^{L^r}$ ($\partial B_1^{L^r}=\{u \in L^r(\Omega)   :   \|u\|_r=1\}$) and $\widehat{u}_1(r,a)$ is the positive, $L^r$-normalized eigenfunction (i.e., $\|\widehat{u}_1(r,a)\|_r=1$)  corresponding to $\widehat{\lambda}_1(r,a)>0$. Recall that $\widehat{u}_1=\widehat{u}_1(r,a)\in {\rm int \, }C_+$.

The above features lead to the following proposition.

\begin{proposition}
	\label{prop2} Let $\eta \in L^\infty(\Omega)$, $\eta(z)\leq \widehat{\lambda}_1(r,a)$ for a.a. $z \in \Omega$ and the inequality be strict on a set of positive Lebesgue measure. Then, $\int_\Omega a(z)|\nabla u|^p dz- \int_\Omega \eta(z) |u|^p dz \geq \widehat{c}\,  \|\nabla u\|^p$ for some $\widehat{c}>0$, all $u \in W_0^{1,p}(\Omega)$.
\end{proposition}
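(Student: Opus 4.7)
The plan is to argue by contradiction and exploit the variational characterization of $\widehat{\lambda}_1(r,a)$ from \eqref{eq1} together with the strict positivity of the eigenfunction $\widehat{u}_1 \in \text{int } C_+$.

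First I would suppose the conclusion fails, so there exists a sequence $\{u_n\}_{n \in \mathbb{N}} \subseteq W_0^{1,p}(\Omega)$ with
$$\int_\Omega a(z)|\nabla u_n|^p dz - \int_\Omega \eta(z)|u_n|^p dz < \frac{1}{n} \|\nabla u_n\|^p.$$
By homogeneity, I may normalize $\|\nabla u_n\| = 1$, so $\{u_n\}$ is bounded in $W_0^{1,p}(\Omega)$. Passing to a subsequence and using the Rellich--Kondrachov embedding, I may assume $u_n \xrightarrow{w} u$ in $W_0^{1,p}(\Omega)$ and $u_n \to u$ in $L^p(\Omega)$.

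Next I would pass to the limit. Since $y \mapsto \bigl[\int_\Omega a(z)|y|^p dz\bigr]^{1/p}$ is an equivalent norm on $L^p(\Omega,\mathbb{R}^N)$ (as already noted in the preliminaries), the functional $v \mapsto \int_\Omega a(z)|\nabla v|^p dz$ is sequentially weakly lower semicontinuous, giving
$$\int_\Omega a(z)|\nabla u|^p dz \leq \liminf_{n \to \infty} \int_\Omega a(z)|\nabla u_n|^p dz \leq \int_\Omega \eta(z)|u|^p dz \leq \widehat{\lambda}_1(r,a) \|u\|_p^p,$$
where the last inequality uses $\eta(z) \leq \widehat{\lambda}_1(r,a)$ a.e. Combined with the variational characterization \eqref{eq1}, this chain forces equality throughout, and in particular $u$ is a minimizer in \eqref{eq1} (possibly after rescaling), hence either $u = 0$ or $u$ is an eigenfunction associated with $\widehat{\lambda}_1(r,a)$.

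The decisive step (and the main obstacle, since it is where the qualitative information about $\widehat{u}_1$ enters) is to rule out both alternatives. If $u \neq 0$, then by simplicity, $u = \vartheta \widehat{u}_1$ for some $\vartheta \in \mathbb{R}\setminus\{0\}$, and since $\widehat{u}_1 \in \text{int } C_+$ we have $|u(z)| > 0$ for all $z \in \Omega$; but then the strict inequality $\eta(z) < \widehat{\lambda}_1(r,a)$ on a set of positive Lebesgue measure yields $\int_\Omega \eta(z)|u|^p dz < \widehat{\lambda}_1(r,a)\|u\|_p^p$, contradicting the equality chain above. If instead $u = 0$, then $u_n \to 0$ in $L^p(\Omega)$, so $\int_\Omega \eta(z)|u_n|^p dz \to 0$, and since $a(z) \geq \widehat{c}_0 > 0$ the defining inequality gives
$$\widehat{c}_0 = \widehat{c}_0 \|\nabla u_n\|^p \leq \int_\Omega a(z)|\nabla u_n|^p dz \leq \int_\Omega \eta(z)|u_n|^p dz + \frac{1}{n} \to 0,$$
another contradiction. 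This closes the argument and produces the desired $\widehat{c} > 0$.
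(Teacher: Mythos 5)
Your proof is correct. The paper actually states Proposition \ref{prop2} without any proof (it is presented as a consequence of "the above features"), and the argument you supply --- contradiction, normalization $\|\nabla u_n\|_p=1$, weak lower semicontinuity of $v\mapsto\int_\Omega a(z)|\nabla v|^p dz$, then ruling out $u\neq 0$ via simplicity and the fact that $\widehat u_1\in{\rm int}\,C_+$ vanishes nowhere in $\Omega$, and ruling out $u=0$ via the lower bound $a(z)\geq\widehat c_0>0$ --- is exactly the standard one this literature implicitly invokes, with all the key steps (in particular where the strict inequality on a set of positive measure and the interior positivity of the principal eigenfunction enter) correctly identified.
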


If $u :\Omega \to  \mathbb{R}$ is measurable, let $u^\pm(z) =\max \{\pm u(z),0\}$ for all $z \in \Omega$. If $u \in W^{1,p}_0(\Omega)$,  then $u^\pm \in W_0^{1,p}(\Omega)$ and $u=u^+-u^-$, $|u|=u^++u^-$. Also, if $u,v : \Omega \to  \mathbb{R}$ are measurable with $u(z)\leq v(z)$ for a.a. $z \in \Omega$, then we set:
$$[u,v]=\{h \in W_0^{1,p}(\Omega) :  u(z)\leq h(z)\leq  v(z)  \mbox{ for a.a. $z \in \Omega$}\}.$$

Now, ${\rm int}_{C^1_0(\overline{\Omega})}[u,v]$ means the interior in $C^1_0(\overline{\Omega})$ of $[u,v] \cap C^1_0(\overline{\Omega})$. For a Banach space $X$  and $\varphi \in C^1(X)$, let
$K_\varphi=\{u \in X   :  \varphi^\prime(u)=0\}$ (namely, critical set of $\varphi$). For $c \in \mathbb{R}$, let
$\varphi^c=\{u \in X : \varphi(u)\leq c\}$, $ K_\varphi^c=\{u \in K_\varphi : \varphi(u)= c\}$.

For a measurable function $g:\Omega \to \mathbb{R}$, then $0 \preceq g$ if and only if for every $K\subseteq \Omega$ compact, one has
$0<c_K \leq g(z)$ for a.a. $z \in K$.
When $g \in C(\Omega)$ and $g(z)>0$ for all $z \in \Omega$, clearly $0 \preceq g$.

In the study of \eqref{eq0}, we use the assumption $H_0$ stated as follows:

\medskip
\noindent $H_0$: $a_1,a_2 \in C^{0,1}(\overline{\Omega})$ and $0<c_1\leq a_1(z),a_2(z)$ for all $z \in \overline{\Omega}$. 

\begin{remark}
	If \ $\widehat{a}(z,y)=a_1(z)|y|^{p-2}y+ a_2(z)|y|^{q-2}y$ for all $(z,y)\in \Omega \times \mathbb{R}^N$, then we see that ${\rm div \,}a(z,\nabla u)=\Delta_p^{a_1}u +\Delta_q^{a_2}u$ for all $u \in W_0^{1,p}(\Omega)$. The primitive of \ $\widehat{a}(z,y)$ is the function $\widehat{G}(z,y)=\frac{a_1(z)}{p}|y|^p+ \frac{a_2(z)}{q}|y|^q$  for all  $(z,y)\in \Omega \times \mathbb{R}^N$. On account of $H_0$, we see that $\widehat{G}(\cdot,\cdot)$ exhibits balanced growth, namely
	$$\frac{c_1}{p}|y|^p\leq \widehat{G}(z,y)\leq c_2[1+|y|^p] \quad \mbox{for some $c_2>0$ and all $(z,y)\in \Omega \times \mathbb{R}^N$.}$$
\end{remark}

We also consider the following set of assumptions on the data:

\medskip
\noindent $H_1$: $f:\Omega \times \mathbb{R} \to \mathbb{R}$ is Carath\'eodory with $f(z,0)=0$ for a.a. $z \in \Omega$, and 
\begin{itemize}
	\item[$(i)$] for every $\rho>0$, there exists $a_\rho \in L^\infty(\Omega)$ with  $|f (z,x)| \leq a_\rho(z)$ for a.a. $z \in \Omega$,  all $|x| \leq \rho$;\\
	\item[$(ii)$]  $\limsup\limits_{x \to \pm \infty} \dfrac{f(z,x)}{|x|^{p-2}x}\leq\widehat{\lambda}_1(p,a_1)$ uniformly for a.a. $z \in \Omega$;\\
	\item[$(iii)$] 
	If $F(z,x)=\int_0^x f(z,s)ds$,	 there is $\tau \in (q,p)$ with
	$ \lim_{x \to \pm \infty} \dfrac{f(z,x)x-pF(z,x)}{|x|^{\tau}}=+\infty$   uniformly for a.a. $z \in \Omega$;\\
	\item[$(iv)$] $ \lim\limits_{x \to 0} \dfrac{f(z,x)}{|x|^{q-2}x}=0$   uniformly for a.a. $z \in \Omega$;\\
	\item[$(v)$] for every $s >0$, there exists $m_s>0$ with $m_s \leq f(z,x)x$ for a.a. $z \in \Omega$, all $|x|\geq s$.
\end{itemize}

\begin{remark}
	According to $H_1(ii)$,  we can have resonance of \eqref{eq0}  with respect to  $\widehat{\lambda}_1(p,a_1)>0$. By the proof of Proposition \ref{prop3}, we will see that this phenomenon originates from the left of $\widehat{\lambda}_1(p,a_1)$ in the sense that 
	$$\lim_{x \to \pm \infty}[\widehat{\lambda}_1(p,a_1)|x|^p-pF(z,x)]=+\infty \quad \mbox{uniformly for a.a. $z \in \Omega$.}$$
	
	We stress that this ensures the coercivity of the corresponding energy functional. Therefore, we can use classical tools of the calculus of variations. Assumption $H_1(iv)$ does not permit the presence of a concave term and this changes the geometry of our problem compared to those of the ``concave'' works mentioned in the Introduction. Finally we mention that assumptions $H_1$ imply that
	\begin{equation}
		\label{eq5}|f(z,x)|\leq a(z)[1+|x|^{p-1}]\quad \mbox{for a.a. $z \in \Omega$, all $x \in \mathbb{R}$, $a \in L^\infty(\Omega)_+$.}
	\end{equation}
	
\end{remark}

When $q=2$, we improve our conclusion about the nodal solution, provided we add a perturbed monotonicity assumption for $f(z,\cdot)$, as follows

\medskip

\noindent $H_1^\prime$:  $H_1$ hold (with $q=2$) and 
\begin{itemize}
	\item[$(vi)$] for every $\rho>0$, there exists $\widehat{\xi}_\rho>0$ such that for a.a. $z \in \Omega$, the function
	$x \to f(z,x)+  \widehat{\xi}_\rho|x|^{p-2}x$
	is nondecreasing on $[-\rho,\rho]$.
\end{itemize}

Finally, we can have a nonexistence result for \eqref{eq0} provided we add a growth restriction for $f(z,\cdot)$, as follows

\medskip

\noindent $H_1^{\prime\prime}$:   $H_1$ hold and 
\begin{itemize}
	\item[$(vi)$] $f(z,x)x\leq \widehat{\lambda}_1(p,a_1)|x|^p$ for a.a. $z \in \Omega$, all $x \in \mathbb{R}$.
\end{itemize}

\section{Positive and negative solutions}\label{sec:3}

The existence of positive and negative solutions for \eqref{eq0} is established in the case $\lambda > \widehat{\lambda}_1(q,a_2)$. We obtain smallest positive  and biggest negative solutions. These solutions of \eqref{eq0} (namely, extremal constant sign solutions) play a crucial role in Section \ref{sec:4} to generate a nodal solution.

\begin{proposition}
	\label{prop3} Let $H_0$, $H_1$ be satisfied, and $\lambda>\widehat{\lambda}_1(q,a_2)$. Then \eqref{eq0} admits  solutions $u_\lambda \in {\rm int \,}C_+$, $v_\lambda \in -{\rm int \,}C_+$.
\end{proposition}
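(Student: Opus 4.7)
My plan is to produce $u_\lambda$ by direct minimization of a one-sided truncated energy and to obtain $v_\lambda$ by the symmetric construction. Set
\[
\varphi^+(u)=\int_\Omega \widehat G(z,\nabla u)\,dz-\frac{\lambda}{q}\|u^+\|_q^q-\int_\Omega F(z,u^+)\,dz,\qquad u\in W^{1,p}_0(\Omega),
\]
so that $\varphi^+\in C^1(W_0^{1,p}(\Omega))$ by \eqref{eq5}, and its critical points weakly solve $-\Delta_p^{a_1}u-\Delta_q^{a_2}u=\lambda(u^+)^{q-1}+f(z,u^+)$; any nonnegative critical point then solves \eqref{eq0}.

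The main obstacle is the coercivity of $\varphi^+$, since $f$ is resonant at $+\infty$ with respect to $\widehat\lambda_1(p,a_1)$. I would argue by contradiction: suppose $\|u_n\|\to\infty$ with $\varphi^+(u_n)\le M$. Because $\int_\Omega \widehat G(z,\nabla u_n^-)\,dz\ge\tfrac{c_1}{p}\|u_n^-\|^p$, necessarily $\|u_n^+\|\to\infty$, so set $w_n:=u_n^+\ge 0$ and $y_n:=w_n/\|w_n\|$, extracting $y_n\xrightarrow{w}y\ge 0$ in $W_0^{1,p}(\Omega)$, $y_n\to y$ in $L^p(\Omega)$. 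Since $q<p$, both $\int\tfrac{a_2}{q}|\nabla w_n|^q\,dz$ and $\tfrac{\lambda}{q}\|w_n\|_q^q$ are $o(\|w_n\|^p)$; from $H_1(i)$--$(ii)$ we have $F(z,x)\le\tfrac{\widehat\lambda_1(p,a_1)+\varepsilon}{p}|x|^p+C_\varepsilon$. Dividing $\varphi^+(w_n)\le M$ by $\|w_n\|^p$, taking $\liminf$ via weak lower semicontinuity, and letting $\varepsilon\to 0$ yields, by \eqref{eq1},
\[
0\ \ge\ \int_\Omega\frac{a_1(z)}{p}|\nabla y|^p\,dz-\frac{\widehat\lambda_1(p,a_1)}{p}\|y\|_p^p\ \ge\ 0;
\]
the case $y=0$ is incompatible with $\int a_1|\nabla y_n|^p\ge c_1>0$ in the same chain, and simplicity forces $y=\vartheta\,\widehat u_1(p,a_1)\in\mathrm{int}\,C_+$ with $\vartheta>0$, so $w_n(z)\to+\infty$ for a.a.\ $z\in\Omega$. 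The finishing blow comes from $H_1(iii)$: a direct integration of $\bigl(F(z,x)/|x|^p\bigr)'=[f(z,x)x-pF(z,x)]/(x\cdot|x|^p)$ from $x$ to $+\infty$ yields the quantitative bound $\widehat\lambda_1(p,a_1)|x|^p-pF(z,x)\ge c_3|x|^\tau-c_4$, and Fatou combined with $\int a_1|\nabla w_n|^p\ge\widehat\lambda_1(p,a_1)\|w_n\|_p^p$ give $\varphi^+(w_n)\ge C\|w_n\|^\tau-C'\|w_n\|^q-C''$, which blows up since $\tau>q$, a contradiction.

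With coercivity secured, $\varphi^+$ is sequentially weakly lower semicontinuous (convex leading term plus $L^p$-compact remainder), so Weierstrass provides a global minimizer $u_\lambda$. To show $u_\lambda\neq 0$, I test with $t\,\widehat u_1(q,a_2)\in\mathrm{int}\,C_+$ for small $t>0$: since $\|\widehat u_1(q,a_2)\|_q=1$ and $\int_\Omega a_2|\nabla\widehat u_1(q,a_2)|^q\,dz=\widehat\lambda_1(q,a_2)$,
\[
\varphi^+(t\,\widehat u_1(q,a_2))\ \le\ C_1\,t^p+\frac{t^q}{q}\bigl[\widehat\lambda_1(q,a_2)-\lambda\bigr]-\int_\Omega F(z,t\,\widehat u_1(q,a_2))\,dz;
\]
the last integral is $o(t^q)$ by $H_1(iv)$, and since $\lambda>\widehat\lambda_1(q,a_2)$ and $q<p$ the $t^q$-term dominates and is negative, whence $\varphi^+(u_\lambda)<0=\varphi^+(0)$. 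Testing $(\varphi^+)'(u_\lambda)=0$ against $-u_\lambda^-$ annihilates the truncated right-hand side and leaves $\int a_1|\nabla u_\lambda^-|^p\,dz+\int a_2|\nabla u_\lambda^-|^q\,dz=0$, so $u_\lambda\ge 0$ and $u_\lambda$ solves \eqref{eq0}. Lieberman's regularity \cite{Ref11} then gives $u_\lambda\in C_+\setminus\{0\}$, and the nonlinear Hopf lemma derived from \cite{Ref18} together with the comparison argument announced in the Introduction upgrades this to $u_\lambda\in\mathrm{int}\,C_+$. The existence of $v_\lambda\in-\mathrm{int}\,C_+$ follows by the symmetric minimization of $\varphi^-(u):=\int_\Omega\widehat G(z,\nabla u)\,dz-\frac{\lambda}{q}\|u^-\|_q^q-\int_\Omega F(z,-u^-)\,dz$.
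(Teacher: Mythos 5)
Your proposal is correct and follows essentially the same route as the paper: minimization of the same one-sided truncation, coercivity via the blow-up sequence $y_n=u_n^+/\|u_n^+\|$ with the resonant case excluded through $H_1(iii)$ and the induced growth of $\widehat{\lambda}_1(p,a_1)|x|^p-pF(z,x)$, nontriviality via $t\,\widehat{u}_1(q,a_2)$, and the test function $-u_\lambda^-$ plus Lieberman regularity and the Hopf-type comparison of \cite{Ref18}. The only (harmless) deviations are cosmetic: you reach $\int_\Omega a_1|\nabla y|^p\,dz\le\widehat{\lambda}_1(p,a_1)\|y\|_p^p$ by an $\varepsilon$-inequality rather than the weak $L^1$ limit $\eta$ used in the paper, and you phrase the final contradiction as a lower bound $\varphi^+(w_n)\ge C\|w_n\|^\tau-C'\|w_n\|^q-C''$ instead of dividing by $\|u_n^+\|^\tau$ and applying Fatou.
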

\begin{proof}
	
	Let $\varphi_\lambda^+ : W_0^{1,p}(\Omega) \to \mathbb{R}$ be  a  $C^1$-functional given as
	$$ \varphi_\lambda^+(u)= \frac{1}{p}\int_\Omega a_1(z)|\nabla u|^{p}dz + \frac{1}{q}\int_\Omega a_2(z) |\nabla u|^q dz - \frac{\lambda}{q}\|u^+\|_q^q- \int_\Omega F(z,u^+)dz $$
	for all $ u \in W^{1,p}_0(\Omega)$.
	We discuss the properties of $\varphi_\lambda^+(\cdot)$ to obtain a positive solution of \eqref{eq0}. As already mentioned the coercivity of functionals is a crucial key to apply the direct methods of calculus of variations.
	
	\medskip
	
	\underline{Claim:} $\varphi_\lambda^+(\cdot)$ is coercive.\\
	
	Arguing by contradiction, suppose that there is $\{u_n\}_{n \in \mathbb{N}}\subseteq W_0^{1,p}(\Omega)$ satisfying
	\begin{align}
		\label{eq6} & \varphi_\lambda^+(u_n)\leq c_3 \quad \mbox{for some $c_3>0$, all $n \in \mathbb{N}$,}\\ \label{eq7} & \|u_n\| \to \infty \quad \mbox{as $n \to \infty$.}
	\end{align}
	
	If $\{u_n^+\}_{n \in \mathbb{N}}\subseteq W_0^{1,p}(\Omega)$ is bounded, then from \eqref{eq6} we deduce the boundedness of $\{u_n^-\}_{n \in \mathbb{N}}\subseteq W_0^{1,p}(\Omega)$. Consequently, we get the boundedness of $\{u_n\}_{n \in \mathbb{N}}\subseteq W_0^{1,p}(\Omega)$, which contradicts \eqref{eq7}. Therefore, one can suppose
	\begin{equation}
		\label{eq8}  \|u_n^+\| \to \infty \quad \mbox{as $n \to \infty$.}
	\end{equation}
	
	We set $y_n =\dfrac{u_n^+}{\|u_n^+\|}$, $n \in \mathbb{N}$. Then $\|y_n\|=1$, $y_n \geq0$ for all $n \in \mathbb{N}$. So, we suppose 
	\begin{equation}
		\label{eq9} y_n \xrightarrow{w} y \mbox{ in $W_0^{1,p}(\Omega)$ and $y_n \to y$ in $L^p(\Omega)$, $y \geq0$.}
	\end{equation}
	
	From \eqref{eq6} we have
	\begin{align}
		\label{eq10}& \frac{1}{p}	\int_\Omega a_1(z)|\nabla u_n|^p dz+ \frac{1}{q}	\int_\Omega a_2(z)|\nabla u_n|^q dz \leq c_3 +\frac{\lambda}{q}\|u_n^+\|_q^q +\int_\Omega F(z,u_n^+)dz,\\ \nonumber \Rightarrow \quad & \frac{1}{p}	\int_\Omega a_1(z)|\nabla y_n|^p dz+ \frac{1}{q\|u_n^+\|^{p-q}}	\int_\Omega a_2(z)|\nabla y_n|^q dz\\ \label{eq11}
		& \leq \frac{c_3}{\|u_n^+\|^{p}} +\frac{\lambda}{q\|u_n^+\|^{p-q}}\|y_n\|_q^q +\int_\Omega \frac{F(z,u_n^+)}{\|u_n^+\|^{p}}dz \quad \mbox{for all $n \in \mathbb{N}$.}
	\end{align}	
	
	Assumption $H_1(ii)$ leads to
	\begin{align}
		\label{eq12} & \frac{F(\cdot,u_n^+(\cdot))}{\|u_n^+\|^p} \xrightarrow{w} \frac{1}{p} \eta y^p \quad \mbox{in }L^1(\Omega),
		\\ \label{eq13} &	\mbox{with $\eta \in L^\infty(\Omega)$ satisfying $\eta(z)\leq \widehat{\lambda}_1(p,a_1)$ for a.a. $z \in \Omega$.}
	\end{align}
	
	Letting $n \to \infty$ in \eqref{eq11},  by \eqref{eq8}, \eqref{eq9}, \eqref{eq12} and the fact that $q<p$, we deduce that
	\begin{equation}
		\label{eq14}\int_\Omega a_1(z) |\nabla y|^p dz \leq \int_\Omega \eta(z)y^p dz.
	\end{equation}
	
	If $\eta \not \equiv \widehat{\lambda}_1(p,a_1)$ (see \eqref{eq13}), then from \eqref{eq14} one has
	$\widehat{c}\, \|y\|^p \leq 0$  (see Proposition \ref{prop2}), and hence $y=0$.
	From \eqref{eq9} and \eqref{eq11}, we see that $\| \nabla y_n\|_p\to 0$, which leads to contradiction with $\|y_n\|=1$ for all $n \in \mathbb{N}$. 
	
	If $\eta(z)= \widehat{\lambda}_1(p,a_1)$ for a.a. $z \in \Omega$, again from \eqref{eq14} one has
	$ \int_\Omega a_1(z) |\nabla y|^p dz=\widehat{\lambda}_1(p,a_1) \|y\|_p^p,
	$ and hence $y=\vartheta \ \widehat{u}_1(p,a_1)$ for some $\vartheta \geq 0$.

	If $\vartheta=0$, then $y=0$ which leads to contradiction with $\|y_n\|=1$ for all $n \in \mathbb{N}$. 
	
	If $\vartheta>0$, then $y \in {\rm int \,}C_+$ and so we have
	$u_n^+(z)\to+\infty$ for a.a. $z \in \Omega$.
	By $H_1(iii)$ given $\xi>0$, there is $M=M(\xi)>0$ satisfying
	\begin{equation}
		\label{eq16} f(z,x)x-pF(z,x)\geq \xi |x|^\tau \quad \mbox{for a.a. $z \in \Omega$, all $|x| \geq M$.}
	\end{equation}
	
	Additionally
	\begin{align*}
		\frac{d}{dx}\left[\frac{F(z,x)}{|x|^p}   \right] &= \frac{f(z,x)|x|^p-p|x|^{p-2}xF(z,x)}{|x|^{2p}}\\
		&= \frac{f(z,x)x-pF(z,x)}{|x|^px} \, \begin{cases} \geq \frac{\xi}{x^{p-\tau+1}}& \mbox{if }x\geq M\\ \leq \frac{\xi}{|x|^{p-\tau}x}& \mbox{if }x \leq -M\end{cases} \, \mbox{ (see \eqref{eq16}),}\\ \Rightarrow \quad & \frac{F(z,y)}{|y|^p}-\frac{F(z,x)}{|x|^p}\geq -\frac{\xi}{p-\tau}\left[\frac{1}{|y|^{p-\tau}}-\frac{1}{|x|^{p-\tau}} \right]
	\end{align*}
	for a.a. $z \in \Omega$, for all $|y| \geq |x|\geq M$. 
	Letting $|y| \to \infty$, by $H_1(ii)$ we deduce that
	\begin{align}
		\nonumber & \frac{\widehat{\lambda}_1(p,a_1)}{p}-\frac{F(z,x)}{|x|^p}\geq \frac{\xi}{p-\tau} \, \frac{1}{|x|^{p-\tau}},\\ \nonumber \Rightarrow \quad & \frac{\widehat{\lambda}_1(p,a_1)|x|^p-pF(z,x)}{|x|^\tau}\geq \frac{\xi p}{p-\tau} \quad \mbox{for a.a. $z \in \Omega$, all $ |x|\geq M$,}\\ \label{eq17}
		\Rightarrow \quad & \lim_{x \to \pm \infty}\frac{\widehat{\lambda}_1(p,a_1)|x|^p-pF(z,x)}{|x|^\tau}=+\infty \quad \mbox{uniformly for a.a. $z \in \Omega$.}
	\end{align}
	
	Now, \eqref{eq10} gives us
	\begin{align}
		\nonumber & \frac{1}{p} \int_\Omega [\widehat{\lambda}_1(p,a_1)(u_n^+)^p-pF(z,u_n^+) ]dz \leq c_3+\lambda \|u_n^+\|_q^q,\\ \Rightarrow \quad &  	 \frac{1}{p} \int_\Omega \frac{\widehat{\lambda}_1(p,a_1)(u_n^+)^p-pF(z,u_n^+)}{(u_n^+)^\tau}y_n^\tau dz \leq \frac{c_3}{\|u_n^+\|^\tau}+\frac{\lambda c_4}{\|u_n^+\|^{\tau-q}}, \label{eq18}
	\end{align}
	for some $c_4>0$, for all $n \in \mathbb{N}$.
	For $n \to \infty$ in \eqref{eq18}  combining \eqref{eq8}, \eqref{eq17}, Fatou's lemma and recalling that $\tau>q$, we leads to contradiction. The boundedness of $\{u_n^+\}_{n\in \mathbb{N}}\subseteq W_0^{1,p}(\Omega)$ is so established. This implies the boundedness of $\{u_n\}_{n\in \mathbb{N}}\subseteq W_0^{1,p}(\Omega)$  (see \eqref{eq6}), which contradicts \eqref{eq7}. This argument establishes the coercivity of $\varphi_\lambda^+(\cdot)$, as stated in the Claim.
	Next, we observe that $\varphi_\lambda^+(\cdot)$ is sequentially weakly lower semicontinuous (by Sobolev embedding theorem). This fact, the Claim and the Weierstrass-Tonelli theorem, lead to the existence of a $u_\lambda \in W_0^{1,p}(\Omega)$ satisfying
	\begin{equation}
		\label{eq19} \varphi_\lambda^+(u_\lambda)=\inf [\varphi_\lambda^+(u): u \in W_0^{1,p}(\Omega)].
	\end{equation}
	
	So, $H_1(iv)$ for fixed $\varepsilon>0$, gives us $\delta=\delta(\varepsilon)>0$ with
	\begin{equation}
		\label{eq20}|F(z,x)|\leq \frac{\varepsilon}{q}|x|^q \quad \mbox{for a.a. $z \in \Omega$,  all $|x|\leq \delta$.}
	\end{equation}
	
	But $\widehat{u}_1(q,a_2)\in {\rm int \, }C_+$ (Section \ref{sec:2}) ensures there exists $t\in (0,1)$ small enough to get
	\begin{equation}
		\label{eq21} 0 \leq t \widehat{u}_1(q,a_2)(z)\leq \delta \quad \mbox{for all $z \in \overline{\Omega}$.}
	\end{equation}
	
	Therefore,
	$$ \varphi_\lambda^+(t \widehat{u}_1(q,a_2))\leq \frac{t^p}{p} \int_\Omega a_1(z) |\nabla \widehat{u}_1(q,a_2)|^p dz+\frac{t^q}{q} [\widehat{\lambda}_1(q,a_2)+\varepsilon-\lambda]$$ (see \eqref{eq20}, \eqref{eq21}, recall $\|\widehat{u}_1(q,a_2)\|_q=1$).
	If we choose $\varepsilon \in (0, \lambda - \widehat{\lambda}_1(q,a_2))$, then
	\begin{equation}
		\label{eq22}\varphi_\lambda^+(t \widehat{u}_1(q,a_2))\leq c_5 t^p-c_6t^q \quad \mbox{for some $c_5,c_6>0$.}
	\end{equation}
	
	As $p>q$, we choose $t \in (0,1)$ appropriately (i.e., even smaller if necessary), then from \eqref{eq22} we get
	\begin{align*}
		& \varphi_\lambda^+( t \widehat{u}_1(q,a_2))<0,\\ \Rightarrow \quad & \varphi_\lambda^+(u_\lambda)<0=\varphi_\lambda^+(0)\quad \mbox{(recall \eqref{eq19}),}\end{align*}
	and so $u_\lambda \neq 0$.
	Again \eqref{eq19} leads  to $(\varphi_\lambda^+)^\prime(u_\lambda)=0$, which implies
	\begin{equation}
		\label{eq23}   \langle A_{p}^{a_1}(u_\lambda),h \rangle + \langle A_{q}^{a_2}(u_\lambda),h \rangle = \int_\Omega [\lambda (u_\lambda^+)^{q-1}+ f(z,u_\lambda^+)]h dz \quad \mbox{for all }  h \in W_0^{1,p}(\Omega).
	\end{equation}
	
	Equation \eqref{eq23} for the test function $h =-u_\lambda^- \in W_0^{1,p}(\Omega)$, leads to
	the inequality $ c_1 \|\nabla u_\lambda^-\|_p^p \leq 0 $ (see   $H_0$), and hence $u_\lambda \geq 0$, $ u_\lambda \neq 0$.	
	Thus $u_\lambda$ is a positive solution of \eqref{eq0} (see \eqref{eq23}). Ladyzhenskaya and Ural$^\prime$tseva \cite[Theorem 7.1]{Ref10} ensures that $u_\lambda \in L^\infty(\Omega)$. Consequently,  the regularity theory of Lieberman \cite{Ref11} implies $u_\lambda \in C_+\setminus \{0\}$.  Now, Papageorgiou et al. \cite[Proposition 2.2]{Ref18} gives us
	\begin{equation}
		\label{eq24}0< u_\lambda(z) \quad \mbox{for all $z \in \Omega$.}
	\end{equation}
	
	We can continue the proof of \cite[Proposition 2.2]{Ref18}, since now we have more regularity (namely now $u_\lambda \in C_+\setminus \{0\}$). So, let $z_1 \in \partial \Omega$ and set $z_2=z_1-2\rho n$ with $\rho \in (0,1)$ small and $n=n(z_1)$ is the outward unit normal at $z_1$. As in \cite{Ref18}, we consider the annulus $D=\{z \in  \Omega : \rho <|z-z_2|<2\rho\}$ and let $m=\min \{u(z) : z \in \partial B_\rho(z_2)>0  \}$ (see \eqref{eq24}). From the proof in \cite{Ref18}, for $\vartheta \in (0,m)$ small, there is $y \in C^1(\overline{D})\cap C^2(D)$ satisfying the inequality
	$-\Delta_p^{a_1}y-\Delta_q^{a_2}y\leq 0$ in $D$ with $y(z_1)=0$, $\dfrac{\partial y}{\partial n}(z_1)<0$. We know that
	$-\Delta_p^{a_1}u_\lambda-\Delta_q^{a_2}u_\lambda \geq 0$ in $\Omega$.
	So, from the weak comparison principle (Pucci and Serrin \cite{Ref21}, p. 61), one has $y(z)\leq u_\lambda(z)$ for all $z \in D$. It follows that
	$ \dfrac{\partial u_\lambda}{\partial n}(z_1)\leq \dfrac{\partial y}{\partial n}(z_1)<0$, and so $u_\lambda \in {\rm int \,}C_+$.
	Similarly working with  $\varphi_\lambda^- : W_0^{1,p}(\Omega) \to \mathbb{R}$ of the form
	$$ \varphi_\lambda^-(u)= \frac{1}{p}\int_\Omega a_1(z)|\nabla u|^{p}dz + \frac{1}{q}\int_\Omega a_2(z) |\nabla u|^q dz - \frac{\lambda}{q}\|u^-\|_q^q- \int_\Omega F(z,-u^-)dz $$
	for all $u \in W^{1,p}_0(\Omega)$, we get a negative solution $v_\lambda \in -{\rm int \,}C_+$ for problem \eqref{eq0} ($\lambda > \widehat{\lambda}_1(q,a_2)$).
\end{proof}

\begin{remark}
	An alternative way to show that $u_\lambda \in {\rm int \,}C_+$, is the following one. Let 
	$\widehat{d}(z)=d(z,\partial \Omega)$ for all $z \in \overline{\Omega}$. By Gilbarg and Trudinger \cite[Lemma 14.16]{Ref7}, we can find $\delta_0>0$ such that $\widehat{d} \in C^2(\overline{\Omega}_{0})$, where $\Omega_{0}=\{z \in \overline{\Omega}: \widehat{d}(z)\leq \delta_0\}$. It follows that $\widehat{d}\in {\rm int \,} C_+$. From Rademacher's theorem (see Gasi\'{n}ski and Papageorgiou \cite{Ref3}, p. 56), we know that $a_1,a_2$ are both differentiable a.e. in $\Omega$. So, by taking $\delta_0>0$ even smaller if necessary we can have $\dfrac{\partial a_1}{\partial n}\Big|_{\Omega_0},\dfrac{\partial a_2}{\partial n}\Big|_{\Omega_0}\leq 0$. On account of \eqref{eq24},   we can find $t \in (0,1)$ small such that $w= t \, \widehat{d}\leq \overline{u}_\lambda$ on $\partial\Omega_0$. Additionally,    \cite[Lemma 14.17]{Ref7} leads to
	$$ 	-\Delta_p^{a_1}w-\Delta_q^{a_2}w \leq 0 \leq -\Delta_p^{a_1}u_\lambda-\Delta_q^{a_2}u_\lambda\quad \mbox{in $\Omega_0$ $($see $H_1(v))$,}\quad w \leq u_\lambda \quad \mbox{on $\partial \Omega_0$.}
	$$
	
	Then the weak comparison principle (see Pucci and Serrin \cite{Ref21}, p. 61), gives us $w \leq u_\lambda$ in $\Omega_0$. Hence for a certain $\widehat{t}\in (0,1)$ small satisfying $\widehat{t} \, \widehat{d}\leq u_\lambda$ in $\Omega$, we get $u_\lambda \in {\rm int \,}C_+$.
\end{remark}

We now establish the existence of   smallest positive and  biggest negative solutions.  From
$H_1(iv)$ and \eqref{eq5}, fixed $\varepsilon>0$, there exists a constant $c_7=c_7(\varepsilon)>0$ satisfying
\begin{align}
	& \nonumber f(z,x)x \geq -\varepsilon |x|^q-c_7|x|^p \quad \mbox{for a.a. $z \in \Omega$, all $x \in \mathbb{R}$,}\\ \label{eq25} \Rightarrow \quad & \lambda |x|^q+ f(z,x)x \geq [\lambda-\varepsilon]|x|^q-c_7|x|^p \quad \mbox{for a.a. $z \in \Omega$, all $x \in \mathbb{R}$.}
\end{align}

Observe that \eqref{eq25} leads to the following auxiliary Dirichlet problem
\begin{equation}\label{eq26} 
	\begin{cases}-\Delta_{p}^{a_1} u(z)-\Delta_{q}^{a_2} u(z)=[\lambda-\varepsilon] |u(z)|^{q-2} u(z)-c_7 |u(z)|^{p-2} u(z) & \mbox{in } \Omega,\\ u \Big|_{\partial \Omega} =0,  \,  \lambda>0.& \end{cases}
\end{equation}

\begin{proposition}
	\label{prop4}Let $H_0$ be satisfied, $\lambda >\widehat{\lambda}_1(q,a_2)$ and $\varepsilon \in (0,\lambda- \widehat{\lambda}_1(q,a_2))$. Then \eqref{eq26} admits a unique positive solution $\overline{u}_\lambda \in  {\rm int \,}C_+$. Additionally, as \eqref{eq26} is odd, then it admits a unique negative solution $\overline{v}_\lambda=-\overline{u}_\lambda \in - {\rm int \,}C_+$.
\end{proposition}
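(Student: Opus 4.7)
The idea is to treat \eqref{eq26} variationally, constructing the positive solution by direct minimization of a suitably truncated energy functional, and then to deduce uniqueness from a Picone/D\'iaz--Saa type convexity argument that exploits the strict decrease of the map $u\mapsto[(\lambda-\varepsilon)u^{q-1}-c_{7}u^{p-1}]/u^{q-1}=(\lambda-\varepsilon)-c_{7}u^{p-q}$ for $u>0$. For positive solutions, define the $C^{1}$-functional $\psi_{\lambda}^{+}:W_{0}^{1,p}(\Omega)\to\mathbb{R}$ by
$$\psi_{\lambda}^{+}(u)=\frac{1}{p}\int_{\Omega}a_{1}(z)|\nabla u|^{p}dz+\frac{1}{q}\int_{\Omega}a_{2}(z)|\nabla u|^{q}dz-\frac{\lambda-\varepsilon}{q}\|u^{+}\|_{q}^{q}+\frac{c_{7}}{p}\|u^{+}\|_{p}^{p},$$
whose nontrivial critical points $u\ge 0$ are exactly the positive weak solutions of \eqref{eq26}. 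The negative counterpart is handled by the odd symmetry of the right-hand side of \eqref{eq26}.

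First I would show that $\psi_{\lambda}^{+}$ is coercive: because $a_{1}(z)\ge c_{1}>0$ (hypothesis $H_{0}$), Poincar\'e gives $\psi_{\lambda}^{+}(u)\ge (c_{1}/p)\|u\|^{p}-c\|u\|^{q}$ for some $c>0$, and $q<p$ yields coercivity. Together with the sequential weak lower semicontinuity (convexity of the gradient terms and compactness in the $L^{p}$- and $L^{q}$-norms coming from the Sobolev embedding), the Weierstrass--Tonelli theorem provides a minimizer $\overline{u}_{\lambda}\in W_{0}^{1,p}(\Omega)$. To see $\overline{u}_{\lambda}\neq 0$, test with $t\,\widehat{u}_{1}(q,a_{2})$ for small $t\in(0,1)$: using $\|\widehat{u}_{1}(q,a_{2})\|_{q}=1$ and $\int_{\Omega}a_{2}(z)|\nabla\widehat{u}_{1}(q,a_{2})|^{q}dz=\widehat{\lambda}_{1}(q,a_{2})$,
$$\psi_{\lambda}^{+}(t\,\widehat{u}_{1}(q,a_{2}))=\frac{t^{q}}{q}\bigl[\widehat{\lambda}_{1}(q,a_{2})-(\lambda-\varepsilon)\bigr]+O(t^{p}),$$
which is strictly negative for small $t$, since $\lambda-\varepsilon>\widehat{\lambda}_{1}(q,a_{2})$ and $p>q$. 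Hence $\psi_{\lambda}^{+}(\overline{u}_{\lambda})<0=\psi_{\lambda}^{+}(0)$. Testing the Euler equation $(\psi_{\lambda}^{+})'(\overline{u}_{\lambda})=0$ with $-\overline{u}_{\lambda}^{-}$ yields $c_{1}\|\nabla\overline{u}_{\lambda}^{-}\|_{p}^{p}\le 0$, so $\overline{u}_{\lambda}\ge 0$, $\overline{u}_{\lambda}\neq 0$. The $L^{\infty}$-bound of Ladyzhenskaya--Ural$^{\prime}$tseva and Lieberman's $C^{1,\alpha}$-regularity theory place $\overline{u}_{\lambda}\in C_{+}\setminus\{0\}$, and the comparison-plus-barrier argument already used in Proposition \ref{prop3} (via \cite{Ref18}) delivers $\overline{u}_{\lambda}\in{\rm int\,}C_{+}$.

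The main obstacle is uniqueness. Given two positive solutions $\overline{u}_{1},\overline{u}_{2}\in{\rm int\,}C_{+}$, they are comparable on $\overline{\Omega}$ (both bounded above and bounded below by a positive multiple of $\widehat{d}(\cdot)=d(\cdot,\partial\Omega)$), so the functions $h_{i}=(\overline{u}_{1}^{\,q}-\overline{u}_{2}^{\,q})/\overline{u}_{i}^{\,q-1}$ lie in $W_{0}^{1,p}(\Omega)$. Testing the weak form of \eqref{eq26} for $\overline{u}_{1}$ with $h_{1}$ and for $\overline{u}_{2}$ with $h_{2}$, subtracting, and invoking the weighted Picone (equivalently, D\'iaz--Saa) convexity inequality for each of the operators $-\Delta_{p}^{a_{1}}$ and $-\Delta_{q}^{a_{2}}$ renders the left-hand side nonnegative, while the right-hand side reduces to
$$-c_{7}\int_{\Omega}\bigl(\overline{u}_{1}^{\,p-q}-\overline{u}_{2}^{\,p-q}\bigr)\bigl(\overline{u}_{1}^{\,q}-\overline{u}_{2}^{\,q}\bigr)\,dz\le 0,$$
with equality iff $\overline{u}_{1}\equiv\overline{u}_{2}$ (since $p>q$ and $c_{7}>0$). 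The delicate point is the validity of the Picone-type bound for the $p$-part when the test quotient is built with the exponent $q$ rather than $p$; this requires the combined weighted $(p,q)$-convexity lemma, in the spirit of the D\'iaz--Saa argument for the $p$-Laplacian. Once uniqueness is established, the oddness of the right-hand side of \eqref{eq26} gives at once $\overline{v}_{\lambda}=-\overline{u}_{\lambda}\in-{\rm int\,}C_{+}$ as the unique negative solution.
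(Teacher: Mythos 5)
Your proposal is correct and follows essentially the same route as the paper: the same truncated coercive functional $\psi_{\lambda}^{+}$ minimized via Weierstrass--Tonelli (with the test function $t\,\widehat{u}_{1}(q,a_{2})$ and the choice $\varepsilon<\lambda-\widehat{\lambda}_{1}(q,a_{2})$) for existence, and a D\'iaz--Sa\'a hidden-convexity argument built on the exponent $q$ for uniqueness. The paper phrases that last step as monotonicity of the Gateaux derivative of the convex functional $j(u)=\frac{1}{p}\int_{\Omega}a_{1}(z)|\nabla u^{1/q}|^{p}dz+\frac{1}{q}\int_{\Omega}a_{2}(z)|\nabla u^{1/q}|^{q}dz$ (convexity supplied by D\'iaz--Sa\'a, Lemma 1, which is precisely the ``combined weighted $(p,q)$-convexity'' you flag as the delicate point), and this is just the integrated form of the Picone-type inequality you test with.
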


\begin{proof}
	We start discussing  the existence of a positive solution for problem \eqref{eq26}. To this end let $\psi_\lambda^+ : W_0^{1,p}(\Omega) \to \mathbb{R}$ defined by 
	$$ \psi_\lambda^+(u)= \frac{1}{p}\int_\Omega a_1(z)|\nabla u|^{p}dz + \frac{1}{q}\int_\Omega a_2(z) |\nabla u|^q dz - \frac{\lambda-\varepsilon}{q}\|u^+\|_q^q + \frac{c_7}{p}\|u^+\|_p^p $$
	for all $u \in W^{1,p}_0(\Omega)$. Since $q<p$, we see that $\psi_\lambda^+(\cdot)$ is coercive. Also, it is sequentially weakly lower semicontinuous. By using the similar arguments to the ones in the proof of Proposition \ref{prop3}, one can find
	$\overline{u}_\lambda \in W^{1,p}_0(\Omega)$ positive solution to \eqref{eq26} (i.e., $\overline{u}_\lambda\geq 0$, $\overline{u}_\lambda\neq 0$) and also $\overline{u}_\lambda \in {\rm int \,}C_+$.  To establish the uniqueness of $\overline{u}_\lambda$, we need the functional $j: L^1(\Omega)\to \overline{\mathbb{R}}=\mathbb{R} \cup \{+\infty\}$ of the form
	\begin{equation}\label{func-j}j(u)= \begin{cases}  \frac{1}{p}\int_\Omega a_1(z)|\nabla u^{1/q}|^{p}dz+\frac{1}{q}\int_\Omega a_2(z)|\nabla u^{1/q}|^q dz & \mbox{if }u \geq 0, \, u^{1/q} \in W_0^{1,p}(\Omega),\\ +\infty & \mbox{otherwise.} 
		\end{cases}
	\end{equation}
	The convexity of \eqref{func-j} follows from D\'{i}az and  Sa\'{a} \cite[Lemma 1]{Ref2}. We introduce ${\rm dom \,}j=\{u \in L^1(\Omega) : j(u)<+ \infty \}$ and argue by contradiction. Suppose that $\overline{w}_\lambda$ is another positive solution of \eqref{eq26}.  Of course,   $\overline{w}_\lambda \in  {\rm int \, }C_+$ and Papageorgiou et al. \cite[Proposition 4.1.22]{Ref16} give us
	$ \dfrac{\overline{u}_\lambda}{\overline{w}_\lambda}\in L^\infty(\Omega)$ and  $\dfrac{\overline{w}_\lambda}{\overline{u}_\lambda}\in L^\infty(\Omega)$.
	Hence if $h= \overline{u}_\lambda^q-\overline{w}_\lambda^q$, a sufficiently small  $|t|<1$ leads to $ \overline{u}_\lambda^q + th \in {\rm dom \,}j,\, \overline{w}_\lambda^q + th \in {\rm dom \,}j$. Since \eqref{func-j} is convex, we have that it is also Gateaux differentiable (in the direction $h$) at  $\overline{u}_\lambda^q$ and at $\overline{w}_\lambda^q$. Using  chain rule together with nonlinear Green's identity (\cite{Ref16}, p. 35), one has
	\begin{align*}
		j^\prime(\overline{u}_\lambda)(h) & = \frac{1}{q}\int_\Omega \frac{-\Delta^{a_1}_p \overline{u}_\lambda -\Delta^{a_2}_q \overline{u}_\lambda }{\overline{u}_\lambda^{q-1}}h dz =  \int_\Omega ([\lambda-\varepsilon ] -c_7 \overline{u}_\lambda^{p-q})h dz,\\
		j^\prime(\overline{w}_\lambda)(h) & = \frac{1}{q}\int_\Omega \frac{-\Delta^{a_1}_p \overline{w}_\lambda -\Delta^{a_2}_q \overline{w}_\lambda }{\overline{w}_\lambda^{q-1}}h dz =  \int_\Omega ([\lambda-\varepsilon ] -c_7 \overline{w}_\lambda^{p-q})h dz.
	\end{align*}
	Since \eqref{func-j} is convex, then $j^\prime(\cdot)$ is monotone, and so
	$$ 0 \leq \int_\Omega c_7[\overline{w}_\lambda^{p-q}-\overline{u}_\lambda^{p-q}] (\overline{u}_\lambda^q -\overline{w}_\lambda^q) dz \leq 0,$$ which implies that $\overline{u}_\lambda= \overline{w}_\lambda$.
	We conclude that \eqref{eq26} admits a unique positive solution $\overline{u}_\lambda \in  {\rm int \, }C_+$. By oddness of \eqref{eq26}, we deduce that it admits a unique negative solution $\overline{v}_\lambda= -\overline{u}_\lambda \in - {\rm int \, }C_+$. 
\end{proof}

In the sequel, we will work with:
\begin{align*}
	& \mathcal{S}_\lambda^+=\{\mbox{set of positive solutions to \eqref{eq0}}\},\\
	& \mathcal{S}_\lambda^-=\{\mbox{set of negative solutions to \eqref{eq0}}\}.
\end{align*}

Observe (by Proposition \ref{prop3}) that if $\lambda > \widehat{\lambda}_1(q,a_2)$, then
$\emptyset \neq \mathcal{S}_\lambda^+ \subseteq {\rm int \, }C_+$ and $ \emptyset \neq \mathcal{S}_\lambda^- \subseteq -{\rm int \, }C_+.$
We also mention that the unique constant sign solutions of \eqref{eq26} provide bounds for the elements of these two solution sets.

\begin{proposition}
	\label{prop5} Let $H_0$, $H_1$ be satisfied, and $\lambda > \widehat{\lambda}_1(q,a_2)$. Then $\overline{u}_\lambda \leq u$ for all $u \in \mathcal{S}_\lambda^+$ and $v \leq \overline{v}_\lambda$  for all $v \in \mathcal{S}_\lambda^-$.
\end{proposition}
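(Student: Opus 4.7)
The plan is to establish the bound $\overline{u}_\lambda \leq u$ for an arbitrary $u \in \mathcal{S}_\lambda^+$ by constructing a truncated variational problem whose unique positive solution is $\overline{u}_\lambda$, and showing that the minimizer of the truncated functional lies in the order interval $[0,u]$. The negative case will then follow by an entirely symmetric argument. We fix $\varepsilon \in (0, \lambda - \widehat{\lambda}_1(q, a_2))$ as in Proposition \ref{prop4} and fix $u \in \mathcal{S}_\lambda^+ \subseteq {\rm int\,}C_+$.

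First, I would define the truncation
$$k^+(z,x) = \begin{cases} 0 & \text{if } x < 0, \\ [\lambda - \varepsilon]\,x^{q-1} - c_7\,x^{p-1} & \text{if } 0 \leq x \leq u(z), \\ [\lambda - \varepsilon]\,u(z)^{q-1} - c_7\,u(z)^{p-1} & \text{if } x > u(z), \end{cases}$$
set $K^+(z,x) = \int_0^x k^+(z,s)\,ds$, and introduce the functional
$$\widehat{\psi}_\lambda^+(w) = \frac{1}{p}\int_\Omega a_1(z)|\nabla w|^p\,dz + \frac{1}{q}\int_\Omega a_2(z)|\nabla w|^q\,dz - \int_\Omega K^+(z,w)\,dz, \quad w \in W_0^{1,p}(\Omega).$$
Because $k^+(z,\cdot)$ is bounded in terms of $u \in L^\infty(\Omega)$, the functional $\widehat{\psi}_\lambda^+$ is coercive and sequentially weakly lower semicontinuous. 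By the Weierstrass-Tonelli theorem it has a global minimizer $\widetilde{u}_\lambda \in W_0^{1,p}(\Omega)$.

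Next, I would show nontriviality of $\widetilde{u}_\lambda$. Since both $u$ and $\widehat{u}_1(q, a_2)$ belong to ${\rm int\,}C_+$, for $t > 0$ small enough we have $0 \leq t\,\widehat{u}_1(q, a_2)(z) \leq u(z)$ for all $z \in \overline{\Omega}$, so that on this test function $K^+$ coincides with the unrestricted primitive. Mimicking \eqref{eq22}, we obtain $\widehat{\psi}_\lambda^+(t\,\widehat{u}_1(q,a_2)) \leq c_5 t^p - (\lambda - \varepsilon - \widehat{\lambda}_1(q,a_2))\,t^q/q$, which is strictly negative for $t$ small since $p > q$ and $\lambda - \varepsilon > \widehat{\lambda}_1(q, a_2)$. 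Hence $\widehat{\psi}_\lambda^+(\widetilde{u}_\lambda) < 0 = \widehat{\psi}_\lambda^+(0)$, giving $\widetilde{u}_\lambda \neq 0$. Testing the Euler equation $(\widehat{\psi}_\lambda^+)'(\widetilde{u}_\lambda) = 0$ with $-\widetilde{u}_\lambda^- \in W_0^{1,p}(\Omega)$ and using $H_0$ yields $\widetilde{u}_\lambda \geq 0$.

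The key step, and the main obstacle, is the upper bound $\widetilde{u}_\lambda \leq u$. For this I would test the Euler equation with $h = (\widetilde{u}_\lambda - u)^+ \in W_0^{1,p}(\Omega)$. On $\{\widetilde{u}_\lambda > u\}$, the truncation gives $k^+(z, \widetilde{u}_\lambda) = [\lambda - \varepsilon]\,u^{q-1} - c_7\,u^{p-1}$, and by \eqref{eq25} this is bounded above by $\lambda\,u^{q-1} + f(z, u)$. Subtracting the Euler equation for $\widetilde{u}_\lambda$ from the weak form of \eqref{eq0} tested at $u$ against the same $h$, we find
$$\langle A_p^{a_1}(\widetilde{u}_\lambda) - A_p^{a_1}(u), h\rangle + \langle A_q^{a_2}(\widetilde{u}_\lambda) - A_q^{a_2}(u), h\rangle \leq 0,$$
and strict monotonicity of $A_p^{a_1}, A_q^{a_2}$ forces $h \equiv 0$, i.e., $\widetilde{u}_\lambda \leq u$. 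Therefore on $[0,u]$ the truncation is inactive and $\widetilde{u}_\lambda$ solves the auxiliary problem \eqref{eq26}. By Proposition \ref{prop4}, $\widetilde{u}_\lambda = \overline{u}_\lambda$, which gives $\overline{u}_\lambda \leq u$.

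Finally, an analogous construction on the negative side using a functional $\widehat{\psi}_\lambda^-$ truncated between $v(z)$ and $0$ for $v \in \mathcal{S}_\lambda^-$, together with the uniqueness of the negative solution $\overline{v}_\lambda = -\overline{u}_\lambda$ of \eqref{eq26}, yields $v \leq \overline{v}_\lambda$.
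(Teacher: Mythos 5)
Your proposal is correct and follows essentially the same route as the paper: the same truncation of the auxiliary reaction at the level of $u$, minimization of the resulting coercive functional, the test functions $-\widetilde{u}_\lambda^-$ and $(\widetilde{u}_\lambda-u)^+$ combined with \eqref{eq25} and monotonicity to locate the minimizer in $[0,u]\setminus\{0\}$, and the uniqueness statement of Proposition \ref{prop4} to identify it with $\overline{u}_\lambda$. The only differences are cosmetic (your explicit $t\,\widehat{u}_1(q,a_2)$ computation for nontriviality is what the paper invokes by reference to Proposition \ref{prop3}, and your notation $\widehat{\psi}_\lambda^+$ collides with a functional the paper uses later for a different purpose).
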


\begin{proof}
	
	For $u \in \mathcal{S}_\lambda^+ \subseteq {\rm int \, }C_+$ and $\varepsilon \in (0,\lambda- \widehat{\lambda}_1(q,a_2))$, we introduce a Carath\'eodory function $k_\lambda^+:\Omega \times \mathbb{R}\to \mathbb{R}$ defined by
	\begin{equation}
		\label{eq28}k_\lambda^+(z,x)=\begin{cases}[\lambda-\varepsilon](x^+)^{q-1}-c_7 (x^+)^{p-1}& \mbox{if }x \leq u(z),\\ [\lambda-\varepsilon]u(z)^{q-1}-c_7 u(z)^{p-1} & \mbox{if }u(z)<x.\end{cases}
	\end{equation}
	
	Let $K_\lambda^+(z,x)=\int_0^x k_\lambda^+(z,s)ds$ and $\beta_\lambda^+:W_0^{1,p}(\Omega)\to \mathbb{R}$ be the $C^1$-functional 
	$$ \beta_\lambda^+(u)=\frac{1}{p} \int_\Omega a_1(z)|\nabla u|^pdz+\frac{1}{q}\int_\Omega a_2(z)|\nabla u|^q dz-\int_\Omega K_\lambda^+(z,u)dz $$
	for all $u \in W_0^{1,p}(\Omega)$.
	Now \eqref{eq28} ensures the coercivity of $\beta_\lambda^+(\cdot)$; additionally, $\beta_\lambda^+(\cdot)$ is sequentially weakly lower semicontinuous.
	By using the similar arguments to the ones in the proof of Proposition \ref{prop3}, one can deduce that there exists $\widetilde{u}_\lambda \in W_0^{1,p}(\Omega)$ with
	\begin{equation} \langle A_p^{a_1}(\widetilde{u}_\lambda),h \rangle + \langle A_q^{a_2}(\widetilde{u}_\lambda),h \rangle = \int_\Omega k_\lambda^+(z,\widetilde{u}_\lambda)h dz \quad \mbox{for all } h \in W_0^{1,p}(\Omega).\label{eq30} \end{equation}
	
	In \eqref{eq30} first we use $h=-\widetilde{u}_\lambda^- \in W_0^{1,p}(\Omega)$ leading to  $\widetilde{u}_\lambda\geq 0$, $\widetilde{u}_\lambda \neq 0$.	Next taking $h=(\widetilde{u}_\lambda-u)^+ \in W_0^{1,p}(\Omega)$, we have
	\begin{align*}
		& \langle A_p^{a_1}(\widetilde{u}_\lambda),(\widetilde{u}_\lambda-u)^+ \rangle  +\langle A_q^{a_2}(\widetilde{u}_\lambda),(\widetilde{u}_\lambda-u)^+ \rangle  \\ & = \int_\Omega \left([\lambda-\varepsilon]u^{q-1}-c_7u^{p-1}\right)(\widetilde{u}_\lambda-u)^+ dz \quad \mbox{(see \eqref{eq28})}\\ & \leq \int_\Omega \left(\lambda u^{q-1}+f(z,u)\right)(\widetilde{u}_\lambda-u)^+ dz \quad \mbox{(see \eqref{eq25})}\\ & = \langle A_p^{a_1}(u),(\widetilde{u}_\lambda-u)^+ \rangle  +\langle A_q^{a_2}(u),(\widetilde{u}_\lambda-u)^+ \rangle \quad \mbox{(since $u \in \mathcal{S}_\lambda^+$),} 	\end{align*}
	which implies $\widetilde{u}_\lambda \leq u$.
	Summarizing
	\begin{equation}
		\label{eq31}\widetilde{u}_\lambda \in [0,u], \quad \widetilde{u}_\lambda \neq 0.
	\end{equation}
	
	Using \eqref{eq28}, \eqref{eq31}, \eqref{eq30}, then $\widetilde{u}_\lambda$ is  positive solution of \eqref{eq26}. So, on account of Proposition \ref{prop4}, we have $\widetilde{u}_\lambda=\overline{u}_\lambda$. Therefore  
	$\overline{u}_\lambda \leq u$  for all $u \in \mathcal{S}_\lambda^+$ (see \eqref{eq31}). Clearly, on the similar lines, one can establish that
	$v \leq \overline{v}_\lambda$ for all $v \in \mathcal{S}_\lambda^-$.
\end{proof}	

The extremal constant sign solutions to \eqref{eq0} ($\lambda > \widehat{\lambda}_1(q,a_2)$) are obtained as follows.
\begin{proposition}
	\label{prop6} Let $H_0$, $H_1$ be satisfied, and $\lambda > \widehat{\lambda}_1(q,a_2)$. Then there exist $u_\lambda^\ast \in \mathcal{S}_\lambda^+$ and $v_\lambda^\ast \in \mathcal{S}_\lambda^-$ where $u_\lambda^\ast \leq u$ for all $u \in \mathcal{S}_\lambda^+$, $v \leq v_\lambda^\ast$ for all $v \in \mathcal{S}_\lambda^-$.
\end{proposition}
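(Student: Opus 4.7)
The plan is to obtain $u_\lambda^\ast$ as the pointwise infimum of a suitable decreasing sequence in $\mathcal{S}_\lambda^+$, relying on the fact that $\mathcal{S}_\lambda^+$ is downward directed and bounded below by $\overline{u}_\lambda\in\mathrm{int}\,C_+$ (Proposition \ref{prop5}). The construction for $v_\lambda^\ast$ is symmetric, so I will only outline the positive case in detail.

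\medskip
\noindent\textbf{Step 1: Downward directedness of $\mathcal{S}_\lambda^+$.} Given $u_1,u_2\in\mathcal{S}_\lambda^+$, set $m=\min\{u_1,u_2\}\in W_0^{1,p}(\Omega)$ and introduce the truncation
$$
\widehat{e}_\lambda^+(z,x)=\begin{cases}0 & \text{if } x<0,\\ \lambda x^{q-1}+f(z,x) & \text{if } 0\le x\le m(z),\\ \lambda m(z)^{q-1}+f(z,m(z)) & \text{if } m(z)<x,\end{cases}
$$
with primitive $\widehat{E}_\lambda^+$, and the associated functional $\widehat{\beta}_\lambda^+(u)=\frac{1}{p}\int_\Omega a_1|\nabla u|^p\,dz+\frac{1}{q}\int_\Omega a_2|\nabla u|^q\,dz-\int_\Omega \widehat{E}_\lambda^+(z,u)\,dz$. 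It is coercive and sequentially weakly lower semicontinuous, so it attains its infimum at some $\widetilde u$; the lower bound $\widehat{\beta}_\lambda^+(\widetilde u)\leq \widehat{\beta}_\lambda^+(t\widehat{u}_1(q,a_2))<0$ for small $t>0$ (argued as in Proposition \ref{prop3}, using $\lambda>\widehat{\lambda}_1(q,a_2)$ and $H_1(iv)$, and noting that $t\widehat{u}_1(q,a_2)\leq m$ when $t$ is small enough since $m\in\mathrm{int}\,C_+$) yields $\widetilde u\neq 0$. Testing $(\widehat{\beta}_\lambda^+)'(\widetilde u)=0$ with $-\widetilde u^-$ gives $\widetilde u\geq 0$, and with $(\widetilde u-m)^+$ and using that $u_1,u_2$ each solve \eqref{eq0} gives $\widetilde u\leq m$ by a standard comparison argument on $\{\widetilde u>u_i\}$. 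Hence $\widetilde u\in\mathcal{S}_\lambda^+$ and $\widetilde u\leq u_1,u_2$.

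\medskip
\noindent\textbf{Step 2: Extraction of a decreasing sequence realizing the infimum.} By a classical lemma (see, e.g., Papageorgiou--Kyritsi-Yiallourou \cite{Ref14} or Hu--Papageorgiou), any downward directed family in $L^p(\Omega)$ admits a decreasing sequence whose pointwise a.e.\ limit coincides with the essential infimum of the family. I apply this to $\mathcal{S}_\lambda^+$ to obtain $\{u_n\}\subseteq\mathcal{S}_\lambda^+$ with $u_n(z)\downarrow u_\lambda^\ast(z)$ a.e., where $u_\lambda^\ast=\inf \mathcal{S}_\lambda^+$ pointwise. By Proposition \ref{prop5}, $\overline{u}_\lambda\leq u_n\leq u_1$ for every $n$, so in particular $u_\lambda^\ast\geq\overline{u}_\lambda>0$ on $\Omega$.

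\medskip
\noindent\textbf{Step 3: Passage to the limit.} Each $u_n$ satisfies
$$
\langle A_p^{a_1}(u_n),h\rangle+\langle A_q^{a_2}(u_n),h\rangle=\int_\Omega\bigl[\lambda u_n^{q-1}+f(z,u_n)\bigr]h\,dz\quad\text{for all }h\in W_0^{1,p}(\Omega).
$$
Testing with $h=u_n$ and using the growth bound \eqref{eq5} together with $u_n\leq u_1\in L^\infty(\Omega)$ gives an $L^\infty$-independent bound, hence $\{u_n\}$ is bounded in $W_0^{1,p}(\Omega)$. Extracting a subsequence, $u_n\xrightharpoonup{w} u_\lambda^\ast$ in $W_0^{1,p}(\Omega)$ and $u_n\to u_\lambda^\ast$ in $L^p(\Omega)$ (the identification of the weak limit with the pointwise limit uses monotonicity and the $L^p$-dominated convergence via $u_1$). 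Taking $h=u_n-u_\lambda^\ast$ in the equation and letting $n\to\infty$ yields $\limsup_n\langle A_p^{a_1}(u_n)+A_q^{a_2}(u_n),u_n-u_\lambda^\ast\rangle\leq 0$; since $A_p^{a_1}+A_q^{a_2}$ is of type $(S)_+$, we deduce $u_n\to u_\lambda^\ast$ in $W_0^{1,p}(\Omega)$. Passing to the limit in the weak formulation shows $u_\lambda^\ast$ solves \eqref{eq0}, and since $u_\lambda^\ast\geq\overline{u}_\lambda\in\mathrm{int}\,C_+$ we have $u_\lambda^\ast\in\mathcal{S}_\lambda^+$. By construction $u_\lambda^\ast\leq u$ for every $u\in\mathcal{S}_\lambda^+$.

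\medskip
\noindent The analogous argument, replacing $\mathcal{S}_\lambda^+$ by $\mathcal{S}_\lambda^-$ (upward directed, bounded above by $\overline{v}_\lambda$), yields $v_\lambda^\ast\in\mathcal{S}_\lambda^-$ with $v\leq v_\lambda^\ast$ for all $v\in\mathcal{S}_\lambda^-$. The main obstacle is Step 1: verifying that the truncation minimizer is nontrivial (which needs $\lambda>\widehat{\lambda}_1(q,a_2)$ combined with $m\in\mathrm{int}\,C_+$, so that the negative-energy test direction fits under the truncation threshold) and that the comparison $\widetilde u\leq m$ survives the fact that the two weights $a_1,a_2$ differ; the latter is handled by testing with $(\widetilde u-m)^+$ on the disjoint subsets $\{m=u_1\}$ and $\{m=u_2\}$ and using the monotonicity of $A_p^{a_1}$ and $A_q^{a_2}$ separately.
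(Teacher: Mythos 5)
Your proposal is correct and follows essentially the same route as the paper: downward directedness of $\mathcal{S}_\lambda^+$, extraction of a decreasing sequence realizing the infimum (the paper cites Hu--Papageorgiou \cite[Lemma 3.10]{Ref8} for this), a uniform $W_0^{1,p}(\Omega)$-bound from testing with $u_n$, and strong convergence via the $(S)_+$ property, with nontriviality of the limit guaranteed by the lower bound $\overline{u}_\lambda\leq u_\lambda^\ast$ from Proposition \ref{prop5}. The only difference is that your Step 1 sketches a direct truncation proof of downward directedness, whereas the paper simply cites Papageorgiou et al.\ \cite[Proposition 7]{Ref15}; your sketch is fine at the paper's level of rigor, though the final comparison $\widetilde u\leq m$ is cleanest if phrased via the standard fact that the minimum of two supersolutions is a supersolution rather than via restricting test functions to the sets $\{m=u_1\}$ and $\{m=u_2\}$.
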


\begin{proof}
	We mention that Papageorgiou et al. \cite[Proposition 7]{Ref15} ensures that $\mathcal{S}_\lambda^+$ is downward directed (i.e., if $u_1,u_2 \in \mathcal{S}_\lambda^+$, then there exists $u \in \mathcal{S}_\lambda^+$ with $u \leq u_1$, $u \leq u_2$). Moreover, Hu and Papageorgiou \cite[Lemma 3.10]{Ref8} help us to find $\{u_n\}_{n \in \mathbb{N}}\subseteq \mathcal{S}_\lambda^+ \subseteq {\rm int \,}C_+$ decreasing and satisfying
	\begin{equation}
		\label{eq32}  \inf\limits_{n \in \mathbb{N}}u_n =\inf \mathcal{S}_\lambda^+, \quad \overline{u}_\lambda \leq u_n \leq u_1 \quad \mbox{for all  $n \in \mathbb{N}$ (see Proposition \ref{prop5}).}
	\end{equation}
	
	Starting from
	\begin{equation}
		\label{eq33}\langle A_{p}^{a_1}(u_n),h \rangle + \langle A_{q}^{a_2}(u_n),h \rangle = \int_\Omega [\lambda u_n^{q-1}+ f(z,u_n)] hdz \quad \mbox{for all }  h \in W_0^{1,p}(\Omega),
	\end{equation}
	and taking $h =u_n\in W_0^{1,p}(\Omega)$, then \eqref{eq32} and  $H_0$ give us 
	$c_1 \| \nabla u_n\|_p^p \leq c_8$ for some $c_8>0$, for all $n \in \mathbb{N}$,
	and hence $\{u_n\}_{n \in \mathbb{N}}\subseteq W_0^{1,p}(\Omega)$ is bounded.
	Therefore, it is possible to suppose 
	\begin{equation}
		\label{eq34} u_n \xrightarrow{w} u_\lambda^\ast \mbox{ in $W_0^{1,p}(\Omega)$, $u_n \to u_\lambda^\ast$ in $L^p(\Omega)$.}
	\end{equation}
	
	Before taking $n \to \infty$ in \eqref{eq33}, we use $h =u_n-u_\lambda^\ast \in W_0^{1,p}(\Omega)$, and by \eqref{eq34} we get
	\begin{align}
		\nonumber & \lim_{n\to \infty} [\langle A_{p}^{a_1}(u_n),u_n-u_\lambda^\ast \rangle + \langle A_{q}^{a_2}(u_n),u_n-u_\lambda^\ast \rangle	]=0, \\ \nonumber \Rightarrow \quad & \limsup_{n\to \infty} [\langle A_{p}^{a_1}(u_n),u_n-u_\lambda^\ast \rangle + \langle A_{q}^{a_2}(u_\lambda^\ast),u_n-u_\lambda^\ast \rangle	]\leq 0 \quad \mbox{(since $A_{q}^{a_2}(\cdot)$ is monotone),}\\ \nonumber \Rightarrow \quad & \limsup_{n\to \infty}  \langle A_{p}^{a_1}(u_n),u_n-u_\lambda^\ast \rangle  \leq 0 \quad \mbox{(see \eqref{eq34}),}\\ \label{eq35} \Rightarrow \quad & u_n \to u_\lambda^\ast \mbox{ in $W_0^{1,p}(\Omega)$ ($A_{p}^{a_1}$ is of type $(S)_+$).}
	\end{align}
	
	Returning to Eq \eqref{eq33} and letting again $n \to \infty$, \eqref{eq35} and \eqref{eq32} lead to
	\begin{align*}
		& \langle A_{p}^{a_1}(u_\lambda^\ast),h \rangle + \langle A_{q}^{a_2}(u_\lambda^\ast),h \rangle = \int_\Omega [\lambda (u_\lambda^\ast)^{q-1}+ f(z,u_\lambda^\ast)]h dz \quad \mbox{for all }  h \in W_0^{1,p}(\Omega), \\ 
		& \overline{u}_\lambda \leq u_\lambda^\ast.
	\end{align*}
	
	We arrive to the conclusion that $ u_\lambda^\ast \in \mathcal{S}_\lambda^+$ and $ u_\lambda^\ast =\inf \mathcal{S}_\lambda^+$.	Similarly, we produce $v_\lambda^\ast \in \mathcal{S}_\lambda^-$, $v_\lambda^\ast =\sup \mathcal{S}_\lambda^-$, where $\mathcal{S}_\lambda^-$ is upward directed (i.e., if $v_1,v_2 \in \mathcal{S}_\lambda^-$, then there exists $v \in \mathcal{S}_\lambda^-$ with $v_1\leq v$, $v_2\leq v$).
\end{proof} 

\section{Nodal solutions}\label{sec:4}

We implement a simple idea: we will use truncations to work over the order interval $[v_\lambda^\ast,u_\lambda^\ast]$. Any nontrivial solution ($\not \equiv \,u_\lambda^\ast$, $v_\lambda^\ast$) of \eqref{eq0} there, will be nodal. The key ingredient is the minimax characterization of $\widehat{\lambda}_2(q,a_2)$ (see \eqref{eq4}).
From Section \ref{sec:3} we have $u_\lambda^\ast \in {\rm int \, }C_+$ and $v_\lambda^\ast \in - {\rm int \, }C_+$ solving \eqref{eq0} ($\lambda > \widehat{\lambda}_1(q,a_2)$). Then we introduce
\begin{equation}
	\label{eq36}\mu_\lambda(z,x)=\begin{cases}
		\lambda |v_\lambda^\ast(z)|^{q-2}v_\lambda^\ast(z)+f(z, v^\ast_\lambda(z)) & \mbox{if } x<v^\ast_\lambda(z),\\
		\lambda |x|^{q-2}x+f(z, x)  & \mbox{if } v^\ast_\lambda(z)\leq x\leq u^\ast_\lambda(z),\\
		\lambda u_\lambda^\ast(z)^{q-1}+f(z, u^\ast_\lambda(z)) & \mbox{if } u^\ast_\lambda(z) <x.
	\end{cases}
\end{equation}

Evidently $\mu_\lambda(\cdot,\cdot)$ is of Carath\'eodory. Additionally, we need 
\begin{equation}
	\label{eq37}\mu_\lambda^\pm(z,x)=\mu_\lambda(z,\pm x^\pm).
\end{equation}
Putting $M_\lambda(z,x)=\int_0^x\mu_\lambda(z,s)ds$, $M_\lambda^\pm(z,x)=\int_0^x\mu_\lambda^\pm(z,s)ds$, one can define the $C^1$-functionals $\widehat{\psi}_\lambda,\widehat{\psi}_\lambda^\pm :W^{1,p}_0(\Omega)\to \mathbb{R}$ as
\begin{align*}
	&	\widehat{\psi}_\lambda(u)=\frac{1}{p}\int_\Omega a_1(z)|\nabla u|^p dz+  \frac{1}{q}\int_\Omega a_2(z)|\nabla u|^q dz -\int_\Omega M_\lambda(z,u)dz,\\
	&\widehat{\psi}_\lambda^\pm(u)=\frac{1}{p}\int_\Omega a_1(z)|\nabla u|^p dz+  \frac{1}{q}\int_\Omega a_2(z)|\nabla u|^q dz   -\int_\Omega M_\lambda^\pm(z,u)dz 
\end{align*}
for all $u \in W^{1,p}_0(\Omega)$.
From  \eqref{eq36}, \eqref{eq37}, the nonlinear regularity theory and the extremality of $u^\ast_\lambda$ and $v^\ast_\lambda$, we infer easily the following result.

\begin{proposition}
	\label{prop7} Let $H_0$, $H_1$ be satisfied, and $\lambda > \widehat{\lambda}_1(q,a_2)$. Then,  $K_{\widehat{\psi}_\lambda}\subseteq [v_\lambda^\ast,u_\lambda^\ast]\cap C_0^1(\overline{\Omega})$, $K_{\widehat{\psi}^+_\lambda}=\{0,u^\ast_\lambda\}$, 
	$K_{\widehat{\psi}^-_\lambda}=\{0,v^\ast_\lambda\}$.\end{proposition}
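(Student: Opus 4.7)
My plan is to exploit the truncation structure of $\mu_\lambda$ and $\mu_\lambda^\pm$ together with the extremality of $u_\lambda^\ast$ and $v_\lambda^\ast$ established in Proposition~\ref{prop6}, using monotonicity comparisons against the equations satisfied by the two extremal solutions.

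For the inclusion $K_{\widehat{\psi}_\lambda}\subseteq[v_\lambda^\ast,u_\lambda^\ast]$, I would take $u\in K_{\widehat{\psi}_\lambda}$, which satisfies
$$\langle A_p^{a_1}(u),h\rangle+\langle A_q^{a_2}(u),h\rangle=\int_\Omega \mu_\lambda(z,u)h\,dz,\quad h\in W_0^{1,p}(\Omega),$$
and test with $h=(u-u_\lambda^\ast)^+$. On $\{u>u_\lambda^\ast\}$ the top branch of \eqref{eq36} gives $\mu_\lambda(z,u)=\lambda (u_\lambda^\ast)^{q-1}+f(z,u_\lambda^\ast)$, which coincides with the right-hand side of the equation for $u_\lambda^\ast\in\mathcal{S}_\lambda^+$; subtracting the two equations and invoking strict monotonicity of $A_p^{a_1}$ and $A_q^{a_2}$ forces $(u-u_\lambda^\ast)^+=0$, whence $u\leq u_\lambda^\ast$. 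The symmetric test $h=(v_\lambda^\ast-u)^+$ against the equation for $v_\lambda^\ast\in\mathcal{S}_\lambda^-$ yields $v_\lambda^\ast\leq u$. Since now $u\in[v_\lambda^\ast,u_\lambda^\ast]\subseteq L^\infty(\Omega)$, the right-hand side of \eqref{eq0} for $u$ is essentially bounded, so the regularity results of Ladyzhenskaya--Ural'tseva \cite{Ref10} and Lieberman \cite{Ref11} place $u$ in $C_0^1(\overline{\Omega})$.

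For $K_{\widehat{\psi}_\lambda^+}=\{0,u_\lambda^\ast\}$, I first note that $v_\lambda^\ast(z)<0<u_\lambda^\ast(z)$ on $\Omega$ together with $f(z,0)=0$ gives $\mu_\lambda(z,0)=0$ via the middle branch of \eqref{eq36}, hence $\mu_\lambda^+(z,x)=\mu_\lambda(z,x^+)$ vanishes on $\{x\leq 0\}$. For $u\in K_{\widehat{\psi}_\lambda^+}$, testing the Euler equation with $h=-u^-$ and invoking $H_0$ produces $c_1\|\nabla u^-\|_p^p\leq 0$, whence $u\geq 0$; the same upper comparison as above (using that $\mu_\lambda^+(z,x)=\mu_\lambda(z,x)$ whenever $x\geq 0$) gives $u\leq u_\lambda^\ast$. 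On $[0,u_\lambda^\ast]$ we have $\mu_\lambda^+(z,u)=\lambda u^{q-1}+f(z,u)$, so $u$ is a solution of \eqref{eq0}, and if $u\not\equiv 0$ then $u\in\mathcal{S}_\lambda^+$ and the extremality from Proposition~\ref{prop6} forces $u=u_\lambda^\ast$. The claim $K_{\widehat{\psi}_\lambda^-}=\{0,v_\lambda^\ast\}$ follows by the mirror argument, testing with $h=u^+$ for the sign and $h=(v_\lambda^\ast-u)^+$ for the lower bound. The main delicacy is verifying these truncation-set computations cleanly, so that on each of $\{u>u_\lambda^\ast\}$, $\{u<v_\lambda^\ast\}$, $\{u\leq 0\}$ (and symmetrically $\{u\geq 0\}$ in the negative case) the modified nonlinearity either exactly matches the equation of the relevant extremal solution or vanishes, thereby letting the monotonicity of $A_p^{a_1}+A_q^{a_2}$ close each inequality.
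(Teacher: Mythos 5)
Your argument is correct and is precisely the truncation--comparison--extremality reasoning that the paper has in mind (the paper does not write out a proof, stating only that the result follows ``easily'' from \eqref{eq36}, \eqref{eq37}, the nonlinear regularity theory and the extremality of $u_\lambda^\ast$, $v_\lambda^\ast$). The one step worth making explicit is that a nontrivial $u\in K_{\widehat{\psi}_\lambda^+}$, being a nonnegative nontrivial solution of \eqref{eq0}, belongs to ${\rm int\,}C_+$ by the regularity and Hopf-type argument of Proposition \ref{prop3}, so that it indeed lies in $\mathcal{S}_\lambda^+$ and Proposition \ref{prop6} applies.
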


We establish the following auxiliary proposition.

\begin{proposition}
	\label{prop8} Let $H_0$, $H_1$ be satisfied, and $\lambda > \widehat{\lambda}_1(q,a_2)$. Then,  $u_\lambda^\ast \in {\rm int \,}C_+$ and  $v_\lambda^\ast \in - {\rm int \,}C_+$
	are local minimizers of  $\widehat{\psi}_\lambda(\cdot)$.\end{proposition}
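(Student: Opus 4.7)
The plan is to focus on $u_\lambda^\ast$, since the argument for $v_\lambda^\ast \in -{\rm int\,}C_+$ is entirely symmetric (replace $\widehat{\psi}_\lambda^+$ by $\widehat{\psi}_\lambda^-$ and $t\widehat{u}_1(q,a_2)$ by $-t\widehat{u}_1(q,a_2)$). I will first show that $u_\lambda^\ast$ is the \emph{global} minimizer of the auxiliary functional $\widehat{\psi}_\lambda^+$, then transfer this to a $C^1_0(\overline{\Omega})$-local minimizer of $\widehat{\psi}_\lambda$, and finally upgrade to a $W_0^{1,p}(\Omega)$-local minimizer.

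From \eqref{eq36}--\eqref{eq37}, the truncated Carath\'eodory nonlinearity $\mu_\lambda^+(z,\cdot)$ vanishes on $(-\infty,0]$ and is uniformly bounded above on $[0,\infty)$ by $\lambda u_\lambda^{\ast\,q-1}+f(\cdot,u_\lambda^\ast)\in L^\infty(\Omega)$. Using $H_0$, the leading term $\frac{c_1}{p}\|\nabla u\|_p^p$ dominates $\int_\Omega M_\lambda^+(z,u)dz$, so $\widehat{\psi}_\lambda^+$ is coercive; it is also sequentially weakly lower semicontinuous by the Sobolev embedding. Hence the Weierstrass–Tonelli theorem produces a global minimizer $\widetilde{u}\in W_0^{1,p}(\Omega)$. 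Since $\widetilde{u}\in K_{\widehat{\psi}_\lambda^+}=\{0,u_\lambda^\ast\}$ by Proposition \ref{prop7}, it suffices to show that the infimum is negative. Since $u_\lambda^\ast,\widehat{u}_1(q,a_2)\in{\rm int\,}C_+$, for small $t>0$ we have $0\leq t\widehat{u}_1(q,a_2)\leq\min\{u_\lambda^\ast,\delta\}$ pointwise (with $\delta=\delta(\varepsilon)$ from $H_1(iv)$), so $M_\lambda^+(z,t\widehat{u}_1(q,a_2))=\frac{\lambda t^q}{q}\widehat{u}_1(q,a_2)^q+F(z,t\widehat{u}_1(q,a_2))$. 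Exactly as in the derivation of \eqref{eq22}, this yields
\begin{equation*}
\widehat{\psi}_\lambda^+(t\widehat{u}_1(q,a_2))\leq c_5 t^p+\frac{t^q}{q}\bigl[\widehat{\lambda}_1(q,a_2)-\lambda+\varepsilon\bigr],
\end{equation*}
which is strictly negative once $\varepsilon\in(0,\lambda-\widehat{\lambda}_1(q,a_2))$ is fixed and $t$ is chosen small (using $p>q$). Thus $\widetilde{u}\neq 0$ and $\widetilde{u}=u_\lambda^\ast$, i.e., $u_\lambda^\ast$ is the global minimizer of $\widehat{\psi}_\lambda^+$.

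Next I compare $\widehat{\psi}_\lambda$ and $\widehat{\psi}_\lambda^+$. From \eqref{eq36}--\eqref{eq37}, whenever $u(z)\geq 0$ one has $\mu_\lambda^+(z,u(z))=\mu_\lambda(z,u(z))$, so $M_\lambda^+(z,u(z))=M_\lambda(z,u(z))$; in particular, $\widehat{\psi}_\lambda$ and $\widehat{\psi}_\lambda^+$ coincide on the cone of functions that are a.e.\ nonnegative. Because $u_\lambda^\ast\in{\rm int\,}C_+$, there is $\rho>0$ such that every $h\in C_0^1(\overline{\Omega})$ with $\|h\|_{C_0^1(\overline{\Omega})}\leq\rho$ satisfies $u_\lambda^\ast+h\in{\rm int\,}C_+$. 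For such $h$,
\begin{equation*}
\widehat{\psi}_\lambda(u_\lambda^\ast+h)=\widehat{\psi}_\lambda^+(u_\lambda^\ast+h)\geq\widehat{\psi}_\lambda^+(u_\lambda^\ast)=\widehat{\psi}_\lambda(u_\lambda^\ast),
\end{equation*}
showing that $u_\lambda^\ast$ is a $C_0^1(\overline{\Omega})$-local minimizer of $\widehat{\psi}_\lambda$.

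The main obstacle is the last step: upgrading from a $C_0^1(\overline{\Omega})$-local minimizer to a $W_0^{1,p}(\Omega)$-local minimizer. This is the analogue of the classical Brezis–Nirenberg/Garc\'ia Azorero–Peral–Manfredi result in the present weighted $(p,q)$-setting. Since the integrand $\widehat{G}(z,y)=\frac{a_1(z)}{p}|y|^p+\frac{a_2(z)}{q}|y|^q$ has balanced growth (as emphasized in the remark following $H_0$) and the perturbation $\mu_\lambda$ obeys an $L^\infty$-type bound (as $u_\lambda^\ast,v_\lambda^\ast\in L^\infty(\Omega)$) plus \eqref{eq5}, one can invoke the corresponding regularity/equivalence theorem available for weighted $(p,q)$-functionals (contained in the framework of Papageorgiou and co-authors cited in the introduction) to conclude that $u_\lambda^\ast$ is a local minimizer of $\widehat{\psi}_\lambda$ in $W_0^{1,p}(\Omega)$. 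The same chain of arguments, applied to $\widehat{\psi}_\lambda^-$ and the test direction $-t\widehat{u}_1(q,a_2)$, yields that $v_\lambda^\ast$ is a $W_0^{1,p}(\Omega)$-local minimizer of $\widehat{\psi}_\lambda$ as well.
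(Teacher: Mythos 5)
Your proposal is correct and follows essentially the same route as the paper: minimize the truncated functional $\widehat{\psi}_\lambda^+$, show the infimum is negative by testing with $t\,\widehat{u}_1(q,a_2)$ (as in the derivation of \eqref{eq22}), identify the minimizer with $u_\lambda^\ast$ via Proposition \ref{prop7}, use the coincidence $\widehat{\psi}_\lambda\big|_{C_+}=\widehat{\psi}_\lambda^+\big|_{C_+}$ to get a $C_0^1(\overline{\Omega})$-local minimizer, and then invoke the $C^1$-versus-$W^{1,p}$ local minimizers theorem (the paper cites \cite{Ref4} for this last step). Your write-up is in fact slightly more explicit than the paper's on the nontriviality of the minimizer, and the only minor imprecision (the pointwise bound on $\mu_\lambda^+$ by $\lambda u_\lambda^{\ast\,q-1}+f(\cdot,u_\lambda^\ast)$, which should just be an $L^\infty$ bound via $H_1(i)$) does not affect the argument.
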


\begin{proof}
	Definitions \eqref{eq36} and \eqref{eq37} give us the coercivity of $\widehat{\psi}^\pm_\lambda(\cdot)$, which are sequentially weakly lower semicontinuous too. Similarly to the proofs of previous propositions but 	involving $\widehat{\psi}_\lambda^+(\cdot)$ this time, there exists  a certain $\widetilde{u}^\ast_\lambda \in W^{1,p}_0(\Omega)$ with 
	$\widetilde{u}^\ast_\lambda \neq 0$.
	As $\widetilde{u}^\ast_\lambda \in K_{\widehat{\psi}_\lambda^+}\setminus \{0\}$, from Proposition \ref{prop7}, we get $\widetilde{u}^\ast_\lambda=u^\ast_\lambda \in {\rm int \,}C_+$.
	Observe $\widehat{\psi}_\lambda \Big|_{C_+}= \widehat{\psi}_\lambda^+ \Big|_{C_+}$ (see \eqref{eq36}, \eqref{eq37}), and hence we have
	\begin{align*}
		& u^\ast_\lambda \mbox{ is a local $C_0^1(\overline{\Omega})$-minimizer of $\widehat{\psi}_\lambda(\cdot)$,}\\  \Rightarrow  \quad  & u^\ast_\lambda \mbox{ is a local $W_0^{1,p}( \Omega)$-minimizer of $\widehat{\psi}_\lambda(\cdot)$ (refer to \cite{Ref4}).} 
	\end{align*}
	Involving in a similar way $\widehat{\psi}_\lambda^-(\cdot)$, we complete the proof for  $v^\ast_\lambda \in - {\rm int \,}C_+$.
\end{proof} 

Using the method outlined in the beginning of this section, we establish the following.

\begin{proposition}
	\label{prop9} Let $H_0$, $H_1$ be satisfied, and $\lambda > \widehat{\lambda}_2(q,a_2)$. Then,  \eqref{eq0} admits a nodal solution $y_\lambda \in [v_\lambda^\ast,u_\lambda^\ast]\cap C_0^1(\overline{\Omega})$.\end{proposition}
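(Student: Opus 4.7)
The plan is to apply the mountain pass theorem to $\widehat{\psi}_\lambda$ between the two local minimizers $v_\lambda^\ast \in -{\rm int\,}C_+$ and $u_\lambda^\ast \in {\rm int\,}C_+$ provided by Proposition \ref{prop8}, then exploit the fact $\lambda>\widehat{\lambda}_2(q,a_2)$ via the minimax formula \eqref{eq4} to certify that the mountain pass value is \emph{strictly negative}. Since $\widehat{\psi}_\lambda(0)=0$, this forces the mountain pass critical point to be nontrivial, and by Proposition \ref{prop7} it lies in $[v_\lambda^\ast,u_\lambda^\ast]\cap C_0^1(\overline{\Omega})$. Because $u_\lambda^\ast$, $v_\lambda^\ast$ are the extremal constant sign solutions (Proposition \ref{prop6}), any nontrivial solution strictly inside this order interval must be sign-changing.

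First I would verify that $\widehat{\psi}_\lambda$ is coercive (immediate from \eqref{eq36}), hence satisfies the $(PS)$-condition, and may assume both $u_\lambda^\ast$, $v_\lambda^\ast$ are \emph{strict} local minimizers (if not, $K_{\widehat{\psi}_\lambda}$ contains a sequence of critical points distinct from $u_\lambda^\ast,v_\lambda^\ast,0$ accumulating at one of them, and by Proposition \ref{prop7} and the extremality it suffices to pick any one). Assuming, without loss of generality, $\widehat{\psi}_\lambda(v_\lambda^\ast)\le\widehat{\psi}_\lambda(u_\lambda^\ast)$, the standard mountain pass argument produces $y_\lambda\in K_{\widehat{\psi}_\lambda}$ with $\widehat{\psi}_\lambda(y_\lambda)=c$, where
\[c=\inf_{\gamma\in\Gamma}\max_{t\in[0,1]}\widehat{\psi}_\lambda(\gamma(t)),\qquad \Gamma=\{\gamma\in C([0,1],W_0^{1,p}(\Omega)):\gamma(0)=v_\lambda^\ast,\;\gamma(1)=u_\lambda^\ast\}.\]

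The heart of the proof is the construction of a test path $\gamma^\ast\in\Gamma$ along which $\widehat{\psi}_\lambda<0$, in three pieces. For the \emph{middle segment}, use the characterization \eqref{eq4}: since $\lambda>\widehat{\lambda}_2(q,a_2)$, there is a path $\widehat{\gamma}_0\in C([-1,1],M_q)$ joining $-\widehat{u}_1(q,a_2)$ to $\widehat{u}_1(q,a_2)$ with $\max_t\int_\Omega a_2(z)|\nabla\widehat{\gamma}_0(t)|^q\,dz<\lambda$; by a standard density/smoothing argument we can take values in $C_0^1(\overline{\Omega})\cap W_0^{1,p}(\Omega)$, so $\widehat{\gamma}_0([-1,1])$ is compact in $C_0^1(\overline{\Omega})$. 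For $\rho>0$ small, $u_\lambda^\ast\in{\rm int\,}C_+$ and $v_\lambda^\ast\in -{\rm int\,}C_+$ ensure $\rho\widehat{\gamma}_0(t)\in[v_\lambda^\ast,u_\lambda^\ast]$ pointwise on $\overline{\Omega}$ for all $t$, so that the truncation \eqref{eq36} is inactive and $\widehat{\psi}_\lambda(\rho\widehat{\gamma}_0(t))$ equals the original energy. A direct computation yields
\[\widehat{\psi}_\lambda(\rho\widehat{\gamma}_0(t))\le \frac{\rho^p}{p}\int_\Omega a_1|\nabla\widehat{\gamma}_0(t)|^p\,dz + \frac{\rho^q}{q}\Bigl[\textstyle\max_t\int_\Omega a_2|\nabla\widehat{\gamma}_0(t)|^q\,dz-\lambda\Bigr] + \Bigl|\int_\Omega F(z,\rho\widehat{\gamma}_0(t))\,dz\Bigr|;\]
using $H_1(iv)$ to bound the reaction by $\tfrac{\varepsilon\rho^q}{q}+c\rho^p$ and exploiting $p>q$, for $\rho$ sufficiently small the $\rho^q$-coefficient is strictly negative uniformly in $t$, giving $\widehat{\psi}_\lambda(\rho\widehat{\gamma}_0(t))<0$.

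For the two \emph{outer segments}, I would apply the second deformation lemma to the coercive functionals $\widehat{\psi}_\lambda^+$ and $\widehat{\psi}_\lambda^-$ whose critical sets are $\{0,u_\lambda^\ast\}$ and $\{0,v_\lambda^\ast\}$ respectively (Proposition \ref{prop7}). Since $\widehat{\psi}_\lambda^+(u_\lambda^\ast)<0$ and, by the same $\rho$-estimate, $\widehat{\psi}_\lambda^+(\rho\widehat{u}_1(q,a_2))<0$, the sublevel set $\{\widehat{\psi}_\lambda^+\le a\}$ with $a<0$ close to $0$ deformation retracts onto $\{u_\lambda^\ast\}$; composing with $u\mapsto u^+$ produces a path in $C_+$ from $\rho\widehat{u}_1(q,a_2)$ to $u_\lambda^\ast$ on which $\widehat{\psi}_\lambda=\widehat{\psi}_\lambda^+$ stays $\le a<0$. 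The analogous construction with $\widehat{\psi}_\lambda^-$ connects $-\rho\widehat{u}_1(q,a_2)$ to $v_\lambda^\ast$. Concatenation yields $\gamma^\ast\in\Gamma$ with $\max_t\widehat{\psi}_\lambda(\gamma^\ast(t))<0$, hence $c<0$ and $y_\lambda\ne 0$. Then $y_\lambda\in[v_\lambda^\ast,u_\lambda^\ast]\cap C_0^1(\overline{\Omega})$ together with Proposition \ref{prop6} forces $y_\lambda$ to be nodal.

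The main obstacle I anticipate is the middle-segment construction: one must produce a path $\widehat{\gamma}_0$ that is simultaneously (i) close enough to the $\widehat{\lambda}_2$-infimum in \eqref{eq4}, (ii) taking values in $C_0^1(\overline{\Omega})$ so that the scaled path $\rho\widehat{\gamma}_0$ fits pointwise in the order interval, and (iii) compact in $C_0^1$ so that a single $\rho$ works uniformly in $t$. The usual remedy is to first restrict \eqref{eq4} to paths in $C_0^1(\overline{\Omega})\cap M_q$ (a dense subset of $M_q$ on which the functional $u\mapsto\int a_2|\nabla u|^q\,dz$ is continuous) and then verify that this restricted infimum still equals $\widehat{\lambda}_2(q,a_2)$.
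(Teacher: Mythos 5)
Your proposal is correct and follows essentially the same route as the paper: mountain pass for $\widehat{\psi}_\lambda$ between the two extremal constant-sign solutions, with the critical value forced below $0=\widehat{\psi}_\lambda(0)$ by a three-piece path whose middle segment exploits the minimax characterization \eqref{eq4} of $\widehat{\lambda}_2(q,a_2)$ after scaling into the order interval, and whose outer segments come from the second deformation theorem applied to $\widehat{\psi}_\lambda^{\pm}$. The density issue you flag at the end is exactly the technical point the paper resolves (via Michael's continuous selection theorem, showing $\widehat{\Gamma}_c$ is dense in $\widehat{\Gamma}$), so your outline matches the published argument in all essentials.
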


\begin{proof} To develop the reasoning here, we start from the inequality
	\begin{equation}
		\label{eq38}\widehat{\psi}_\lambda(v_\lambda^\ast)\leq \widehat{\psi}_\lambda(u_\lambda^\ast),
	\end{equation}	
	but of course we could assume equivalently $\widehat{\psi}_\lambda(v_\lambda^\ast)\geq \widehat{\psi}_\lambda(u_\lambda^\ast)$. On account of Proposition \ref{prop7} and without any restriction, let
	$K_{\widehat{\psi}_\lambda}$ be finite (otherwise we already have an infinity of nodal smooth solutions).  Proposition \ref{prop8}, \eqref{eq38} and Papageorgiou et al. \cite[Theorem 5.7.6]{Ref16}, ensure us that there is $\rho \in (0,1)$ small with
	\begin{equation}
		\label{eq40}\widehat{\psi}_\lambda(v_\lambda^\ast)\leq \widehat{\psi}_\lambda(u_\lambda^\ast)< \inf [\widehat{\psi}_\lambda(u) : \|u-u_\lambda^\ast\|=\rho]=\widehat{m}_\lambda, \quad \rho <\|v_\lambda^\ast-u_\lambda^\ast\| \quad\mbox{(see \eqref{eq38}).}
	\end{equation}
	
	Again definition \eqref{eq36} gives us the coercivity of  $\widehat{\psi}_\lambda(\cdot)$, which hence satisfies the Palais-Smale condition (\cite{Ref16}, p. 369). This fact and \eqref{eq40} lead to a mountain pass geometry, which ensures the existence of $y_\lambda \in W_0^{1,p}(\Omega)$ with
	\begin{equation}
		\label{eq41}y_\lambda \in K_{\widehat{\psi}_\lambda}\subseteq [v_\lambda^\ast,u_\lambda^\ast]\cap C_0^1(\overline{\Omega})\quad \mbox{(see Proposition \ref{prop7}),}\quad \widehat{m}_\lambda \leq \widehat{\psi}_\lambda(y_\lambda).
	\end{equation}
	
	From \eqref{eq41} and \eqref{eq36} it follows that $y_\lambda \in C_0^1(\overline{\Omega})$ solves \eqref{eq0} and it is distinct from $u_\lambda^\ast$, $v_\lambda^\ast$. To conclude, it remains to prove that $y_\lambda \neq 0$. Mountain pass theorem ensures that
	$$\widehat{\psi}_\lambda(y_n)=\inf\limits_{\gamma \in \Gamma}\max\limits_{-1\leq t \leq 1}\widehat{\psi}_\lambda(\gamma(t)),$$
	with $\Gamma=\{\gamma \in C([-1,1],W_0^{1,p}(\Omega)) : \gamma(-1)=v_\lambda^\ast, \, \gamma(1)=u_\lambda^\ast \}$. We consider the following Banach manifolds
	$M=W_0^{1,p}(\Omega) \cap \partial B_1^{L^q}$, $M_c=M \cap C_0^1(\overline{\Omega})$,
	where $\partial B_1^{L^q}=\{u \in L^q(\Omega) : \|u\|_q=1 \}$ and we introduce the sets of paths:
	\begin{align*}
		& \widehat{\Gamma}=\{\widehat{\gamma}\in C([-1,1],M) : \widehat{\gamma}(-1)=-\widehat{u}_1(q,a_2), \, \widehat{\gamma}(1)=\widehat{u}_1(q,a_2) \},\\
		& \widehat{\Gamma}_c=\{\widehat{\gamma}\in C([-1,1],M_c) : \widehat{\gamma}(-1)=-\widehat{u}_1(q,a_2), \, \widehat{\gamma}(1)=\widehat{u}_1(q,a_2) \}.
	\end{align*}
	
	\smallskip
	
	\noindent \underline{Claim:} $\widehat{\Gamma}_c$ is dense in $\widehat{\Gamma}$.
	
	Given $\widehat{\gamma}\in \widehat{\Gamma}$ and $\varepsilon \in (0,1)$, we introduce $\widehat{K}_\varepsilon:[-1,1]\to 2^{C_0^1(\overline{\Omega})}$ of the form
	$$\widehat{K}_\varepsilon(t)=\begin{cases} \{u \in C^1(\overline{\Omega}) : \| u - \widehat{\gamma}(t)\|<\varepsilon  \} & \mbox{if } -1<t<1,\\ \{ \pm \widehat{u}_1(q,a_2)\} & \mbox{if }t=\pm 1.
	\end{cases}$$
	
	This multifunction has nonempty and convex values. Additionally, for $t \in (-1,1)$ $\widehat{K}_\varepsilon(t)$ is open, while the sets $\widehat{K}_\varepsilon(1)$, $\widehat{K}_\varepsilon(-1)$ are singletons. Now, Hu and  Papageorgiou \cite[Proposition 2.6]{Ref8}, implies that $\widehat{K}_\varepsilon(\cdot)$ is lsc, and hence  Michael \cite[Theorem 3.1$^{\prime\prime\prime}$]{Ref13} ensures the existence of a continuous map $\widehat{\gamma}_\varepsilon: [-1,1]\to C_0^1(\overline{\Omega})$ with
	$\widehat{\gamma}_\varepsilon(t) \in \widehat{K}_\varepsilon(t)$  for all $t \in [-1,1]$.
	
	Put $\varepsilon=n^{-1}$, $n \in \mathbb{N}$ and let $\widehat{\gamma}_n=\widehat{\gamma}_{\frac{1}{n}}$ be the continuous selection of the multifunction $\widehat{K}_{\frac{1}{n}}(\cdot)$ produced above. The inequality
	\begin{equation}
		\label{eq42}\|\widehat{\gamma}_n(t)-\widehat{\gamma}(t)\|<\frac{1}{n}\quad \mbox{for all $t\in [-1,1]$,}
	\end{equation}
	holds and since $\widehat{\gamma}\in \widehat{\Gamma}$, we see that $\|\widehat{\gamma}(t)\|\geq m>0$ for all $t \in [-1,1]$. Hence \eqref{eq42} leads us to suppose $\|\widehat{\gamma}_n(t)\|\neq 0$ for all $t \in [-1,1]$, all $n \in \mathbb{N}$. We set
	$\widetilde{\gamma}_n(t)=\dfrac{\widehat{\gamma}_n(t)}{\|\widehat{\gamma}_n(t)\|_q}$ for all $t \in [-1,1]$, all $n \in \mathbb{N}$. Then we have
	$\widetilde{\gamma}_n \in C([-1,1],M_c)$, $\widetilde{\gamma}_n(\pm1)=\pm \widehat{u}_1(q,a_2).$ Moreover,
	\begin{align}
		\nonumber \|\widetilde{\gamma}_n(t)-\widehat{\gamma}(t)\| &\leq \|\widetilde{\gamma}_n(t)-\widehat{\gamma}_n(t)\|+\|\widehat{\gamma}_n(t)-\widehat{\gamma}(t)\|\\
		\label{eq43} & \leq \frac{|1-\|\widehat{\gamma}_n(t)\|_q|}{\|\widehat{\gamma}_n(t)\|_q}\|\widehat{\gamma}_n(t)\|+\frac{1}{n} \quad \mbox{for all $t \in [-1,1]$, all $n \in \mathbb{N}$ (see \eqref{eq42}).}
	\end{align}
	
	Note that
	\begin{align*}
		\max\limits_{-1 \leq t \leq 1} |1-\|\widehat{\gamma}_n(t)\|_q|
		& = \max\limits_{-1 \leq t \leq 1} |\|\widehat{\gamma}(t)\|_q-\|\widehat{\gamma}_n(t)\|_q| \quad \mbox{(since $\widehat{\gamma}\in \widehat{\Gamma}$)}\\
		& \leq  \max\limits_{-1 \leq t \leq 1} \|\widehat{\gamma}(t)-\widehat{\gamma}_n(t)\|_q\\
		& \leq c_9 \max\limits_{-1 \leq t \leq 1} \|\widehat{\gamma}(t)-\widehat{\gamma}_n(t)\| \quad \mbox{for some $c_9>0$ ($W_0^{1,q}(\Omega)\hookrightarrow L^q(\Omega)$)}\\ &\leq \frac{c_9}{n}\quad \mbox{(see \eqref{eq42}).}
	\end{align*}
	
	We use this estimate in \eqref{eq43}, together with \eqref{eq42} and the fact that $W_0^{1,p}(\Omega)\hookrightarrow L^q(\Omega)$. We obtain
	$$ \|\widetilde{\gamma}_n(t)-\widehat{\gamma}(t)\| \leq \frac{c_9}{nc_{10}-1}\left[1+\frac{1}{n}\right]+\frac{1}{n}\quad\mbox{for some $c_{10}>0$, all $n \in \mathbb{N}$,}
	$$ which implies that $\widehat{\Gamma}_c$ is dense in $\widehat{\Gamma}$.
	Using this and \eqref{eq4}, one can find $\widehat{\gamma}\in \widehat{\Gamma}_c$ satisfying
	\begin{align*}
		& \label{eq44}\int_\Omega a_2(z)|\nabla \widehat{\gamma}(t)|^qdz<\widehat{\lambda}_2(q,a_2)+\vartheta \quad \mbox{for all $t \in [-1,1]$, with $0<\vartheta<\dfrac{1}{2}(\lambda -\widehat{\lambda}_2(q,a_2))$.}
	\end{align*}
	
	Next, $H_1(iv)$ ensures the existence of $\delta>0$ satisfying
	\begin{equation}
		\label{eq45}F(z,x)\geq -\frac{\vartheta}{q}|x|^q\quad \mbox{for a.a. $z \in \Omega$, all $|x|\leq \delta$.}
	\end{equation}
	
	We have the compactness of $\widehat{\gamma}([-1,1])\subseteq M_c$, and we know that $u_\lambda^\ast \in {\rm int \,}C_+$ and $v_\lambda^\ast \in -{\rm int \,}C_+$. Now, by Papageorgiou et al. \cite[Proposition 4.1.24]{Ref16}, we can find $\xi \in (0,1)$ small with
	\begin{equation}\begin{gathered}
			\label{eq46} \xi \,\widehat{\gamma}(t)\in [v_\lambda^\ast,u_\lambda^\ast]\cap C_0^1(\overline{\Omega})\quad \mbox{for all $t \in [-1,1]$,}\\ |\xi \, \widehat{\gamma}(t)(z)|\leq \delta \quad \mbox{for all $t \in [-1,1]$, all $z \in \overline{\Omega}$.}
		\end{gathered}
	\end{equation}	
	
	Consider $u \in \xi \,\widehat{\gamma}([-1,1])$. Therefore $u=\xi \, \widehat{u}$ with $\widehat{u}\in \widehat{\gamma}([-1,1])$. We have
	\begin{align*}
		\widehat{\psi}_\lambda(u) & \leq \frac{\xi^p}{p}\int_\Omega a_1(z)|\nabla \widehat{u}|^p dz +\frac{\xi^q}{q}\left[\int_\Omega a_2(z)|\nabla \widehat{u}|^q dz -(\lambda-\vartheta)\right]\\ & \hskip 4cm \mbox{(see \eqref{eq45}, \eqref{eq46} and recall $\|\widehat{\gamma}(t)\|_q=1$)}\\
		& \leq \frac{\xi^p}{p}\int_\Omega a_1(z)|\nabla \widehat{u}|^p dz -\frac{\xi^q}{q}\left[\lambda-(\widehat{\lambda}_2(q,a_2)+2\vartheta)\right]\quad \mbox{(see again \eqref{eq45}, \eqref{eq46})}\\ & \leq c_{11}\xi^p-c_{12}\xi^q \quad \mbox{for some $c_{11}, c_{12}>0$ (recall the choice of $\vartheta$).}
	\end{align*}
	
	Then choosing $\xi \in (0,1)$ (smaller enough), one has
	\begin{equation}
		\label{eq47} \widehat{\psi}_\lambda \Big|_{\gamma_0}<0 \quad \mbox{where $\gamma_0=\xi \,\widehat{\gamma}$.}
	\end{equation}
	
	Let $a=\widehat{\psi}_\lambda^+(u_\lambda^\ast)=\widehat{\psi}_\lambda(u_\lambda^\ast)$ and $b=0=\widehat{\psi}_\lambda^+(0)=\widehat{\psi}_\lambda(0)$. From the proof of Proposition \ref{prop8}, we know that $a<b=0$. Moreover on account of Proposition \ref{prop7} and since $u_\lambda^\ast$ is the global minimizer of $\widehat{\psi}_\lambda^+$, one can conclude that
	$K^a_{\widehat{\psi}_\lambda^+}=\{u_\lambda^\ast\}$, $ \widehat{\psi}_\lambda^+(K_{\widehat{\psi}_\lambda^+})\cap (a,0)=\emptyset.$
	
	Therefore we can apply the second deformation theorem in Papageorgiou et al. \cite{Ref16} (p. 386) and produce  $h_0:[0,1 ]\times ((\widehat{\psi}^+_\lambda)^0\setminus K^0_{\widehat{\psi}_\lambda^+})\to (\widehat{\psi}_\lambda^+)^a$ such that
	\begin{align}
		& \label{eq48}h_0(0,u)=u \quad \mbox{for all $u \in ((\widehat{\psi}_\lambda^+)^0\setminus\{0\})$ (note $K^0_{\widehat{\psi}_\lambda^+}=\{0\}$),}\\ & \label{eq49}h_0(t,u)=u_\lambda^\ast \quad \mbox{for all $u \in ((\widehat{\psi}_\lambda^+)^0\setminus\{0\})$,  all $t \in [0,1]$ (note $K^a_{\widehat{\psi}_\lambda^+}=\{u_\lambda^\ast\}$),}\\
		\label{eq50} & \widehat{\psi}_\lambda^+(h_0(t,u))\leq \widehat{\psi}_\lambda^+(h_0(s,u))\quad \mbox{for all $0\leq s\leq t\leq 1$, all $u\in ((\widehat{\psi}_\lambda^+)^0\setminus\{0\})$.}
	\end{align}
	
	These properties of the deformation $h_0$ imply that $K^a_{\widehat{\psi}_\lambda^+}$ is a strong deformation retract of $(\widehat{\psi}_\lambda^+)^0\setminus\{0\}$ and the deformation is $\widehat{\psi}_\lambda^+$-decreasing. We set $\gamma_+(t)=h_0(t,\xi \, \widehat{u}_1(q,a_2))^+$ for all $t \in [0,1]$, i.e., a continuous path in $W_0^{1,p}(\Omega)$ and its trace is in the positive cone of $W_0^{1,p}(\Omega)$. Note $\xi \,\widehat{u}_1(q,a_2) \in (\widehat{\psi}_\lambda^+)^0$ (see \eqref{eq47}) and $\widehat{\psi}_\lambda^+(\xi\, \widehat{u}_1(q,a_2))=\widehat{\psi}_\lambda(\xi\, \widehat{u}_1(q,a_2))$. So, we have
	\begin{align}
		\nonumber & \gamma_+(0)=  \xi \,\widehat{u}_1(q,a_2)\quad \mbox{(see \eqref{eq48})},\\
		\nonumber & \gamma_+(1)=u_\lambda^\ast \quad \mbox{(see \eqref{eq49})},\\
		\nonumber & \widehat{\psi}_\lambda^+(\gamma_+(t))\leq \widehat{\psi}_\lambda^+(\gamma_+(0)) \quad \mbox{for all $t\in [0,1]$ (see \eqref{eq50})},\\ \nonumber \Rightarrow \quad & \widehat{\psi}_\lambda(\gamma_+(t))\leq \widehat{\psi}_\lambda(\xi \,\widehat{u}_1(q,a_2))<0 \quad \mbox{for all $t\in [0,1]$ (see \eqref{eq37}, \eqref{eq47})},\\ \label{eq51} \Rightarrow \quad & \widehat{\psi}_\lambda\Big|_{\gamma_+}<0,
	\end{align}
	with $\gamma_+$ being a continuous path in $W_0^{1,p}(\Omega)$, linking $\xi \, \widehat{u}_1(q,a_2)$ to $u_\lambda^\ast$. For $\widehat{\psi}_\lambda^-$, we can produce in a similar way a continuous path $\gamma_-$ in $W_0^{1,p}(\Omega)$, connecting $-\xi \, \widehat{u}_1(q,a_2)$ and $v_\lambda^\ast$.  and such that
	\begin{equation}
		\label{eq52} \widehat{\psi}_\lambda\Big|_{\gamma_-}<0.
	\end{equation}
	
	Merging $\gamma_-$, $\gamma_0$, $\gamma_+$, we get $\gamma_\ast \in \Gamma$ satisfying
	\begin{align*}
		& \widehat{\psi}_\lambda\Big|_{\gamma_\ast}<0 \quad \mbox{(see \eqref{eq47}, \eqref{eq51}, \eqref{eq52}),}\\ \Rightarrow \quad &
		\widehat{\psi}_\lambda(y_\lambda)<0=\widehat{\psi}_\lambda(0),\end{align*}
	which implies $y_\lambda \neq 0$, and so $y_\lambda \in [v_\lambda^\ast,u_\lambda^\ast]\cap C_0^1(\overline{\Omega})$  is nodal solution to \eqref{eq0}.
	
\end{proof}

So, we have the following multiplicity result of \eqref{eq0}. We emphasize that in this theorem, one has sign information for all the solutions and the solutions are ordered.

\begin{theorem}
	\label{th10} Let $H_0$, $H_1$ be satisfied. Thus: 
	\begin{itemize}
		\item[$(a)$] if $\lambda >\widehat{\lambda}_1(q,a_2)$, then \eqref{eq0}  admits at least two constant sign solutions $u_\lambda \in {\rm int \,} C_+$, $v_\lambda \in -{\rm int \,} C_+$;
		\item[$(b)$] if $\lambda >\widehat{\lambda}_2(q,a_2)$, then there is also a nodal solution of \eqref{eq0}, namely $y_\lambda \in [v_\lambda,u_\lambda] \cap C_0^1(\overline{\Omega})$.
	\end{itemize} 
\end{theorem}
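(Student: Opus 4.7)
The plan is to observe that Theorem \ref{th10} is essentially an aggregation of the three main propositions of Sections \ref{sec:3} and \ref{sec:4}, so its proof amounts to invoking them in the right order and reconciling the notation.

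For part $(a)$, when $\lambda>\widehat{\lambda}_1(q,a_2)$, I would simply cite Proposition \ref{prop3}, which directly produces $u_\lambda\in\mathrm{int\,}C_+$ and $v_\lambda\in-\mathrm{int\,}C_+$ solving \eqref{eq0} by minimizing the coercive, sequentially weakly lower semicontinuous truncated functionals $\varphi_\lambda^\pm$. No further work is needed here.

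For part $(b)$, when $\lambda>\widehat{\lambda}_2(q,a_2)$ (which in particular implies $\lambda>\widehat{\lambda}_1(q,a_2)$ since $\widehat{\lambda}_1<\widehat{\lambda}_2$), I would first invoke Proposition \ref{prop6} to obtain the extremal constant sign solutions $u_\lambda^\ast\in\mathcal{S}_\lambda^+\subseteq\mathrm{int\,}C_+$ (smallest positive) and $v_\lambda^\ast\in\mathcal{S}_\lambda^-\subseteq-\mathrm{int\,}C_+$ (biggest negative). Then Proposition \ref{prop9} applied to this pair yields a nodal solution $y_\lambda\in[v_\lambda^\ast,u_\lambda^\ast]\cap C_0^1(\overline{\Omega})$. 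To match the statement of the theorem, I simply rename $u_\lambda:=u_\lambda^\ast$ and $v_\lambda:=v_\lambda^\ast$; these are still particular constant sign solutions of \eqref{eq0}, and the inclusion $y_\lambda\in[v_\lambda,u_\lambda]\cap C_0^1(\overline{\Omega})$ built into Proposition \ref{prop9} automatically provides both the ordering and the sign information claimed.

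There is no genuine obstacle at this stage, since the real technical content, in particular the density argument for $\widehat{\Gamma}_c\subseteq\widehat{\Gamma}$ and the construction of a path through the origin lying in $\{\widehat{\psi}_\lambda<0\}$ using the second deformation theorem, has already been carried out inside Proposition \ref{prop9}. The only small point worth emphasizing explicitly is that the constant sign solutions exhibited in part $(a)$ and those used to bound the nodal solution in part $(b)$ can be chosen consistently: one takes the extremal pair $(u_\lambda^\ast,v_\lambda^\ast)$, which is a legitimate choice of $(u_\lambda,v_\lambda)$ in part $(a)$, so that the two statements dovetail and the solutions produced are indeed ordered $v_\lambda\le y_\lambda\le u_\lambda$ in $\Omega$.
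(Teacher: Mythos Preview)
Your proposal is correct and matches the paper's approach exactly: the paper does not supply a separate proof of Theorem \ref{th10} but simply states it as a summary of the preceding propositions (``So, we have the following multiplicity result of \eqref{eq0}''), and your invocation of Proposition \ref{prop3} for part (a) and Propositions \ref{prop6} and \ref{prop9} for part (b), together with the identification $u_\lambda=u_\lambda^\ast$, $v_\lambda=v_\lambda^\ast$, is precisely what is intended.
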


If $q=2$ (weighted $(p,2)$-equation), then we can improve a little Theorem \ref{th10}$(b)$. 

\begin{theorem}
	\label{th11} Let $H_0$, $H_1^\prime$ (with $q=2$) be satisfied, and $\lambda >\widehat{\lambda}_2(2,a_2)$. Then, \eqref{eq0}  (with $q=2$) admits at least three nontrivial smooth solutions with sign information and ordered $u_\lambda \in {\rm int \,} C_+$, $v_\lambda \in -{\rm int \,} C_+$, $y_\lambda \in {\rm int \,}_{C_0^1(\overline{\Omega})}[v_\lambda,u_\lambda]$. 
\end{theorem}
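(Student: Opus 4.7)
The proof builds on Theorem \ref{th10}(b) applied with $q=2$: this immediately produces $u_\lambda\in{\rm int\,}C_+$, $v_\lambda\in -{\rm int\,}C_+$ (the extremal constant sign solutions from Proposition \ref{prop6}) and a nodal solution $y_\lambda\in[v_\lambda,u_\lambda]\cap C_0^1(\overline{\Omega})$ of \eqref{eq0}. The new content of Theorem \ref{th11} is the strict interior containment $y_\lambda\in{\rm int}_{C_0^1(\overline{\Omega})}[v_\lambda,u_\lambda]$, which by definition amounts to proving
\[
u_\lambda-y_\lambda\in{\rm int\,}C_+ \quad\text{and}\quad y_\lambda-v_\lambda\in{\rm int\,}C_+.
\]
The extra input that makes this possible is the one-sided monotonicity condition $H_1^\prime(vi)$, available here precisely because we are in the $(p,2)$-case.

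The plan is to set $\rho=\max\{\|u_\lambda\|_\infty,\|v_\lambda\|_\infty\}$, let $\widehat{\xi}_\rho>0$ be the constant supplied by $H_1^\prime(vi)$, and rewrite \eqref{eq0} (with $q=2$) in the equivalent form
\[
-\Delta_p^{a_1}u-\Delta_2^{a_2}u+\widehat{\xi}_\rho|u|^{p-2}u=H(z,u),\qquad H(z,x):=\lambda x+f(z,x)+\widehat{\xi}_\rho|x|^{p-2}x,
\]
which is satisfied by both $u_\lambda$ and $y_\lambda$. Since $q=2$ makes $x\mapsto\lambda x$ strictly increasing and $H_1^\prime(vi)$ makes $x\mapsto f(z,x)+\widehat{\xi}_\rho|x|^{p-2}x$ nondecreasing on $[-\rho,\rho]$, the map $x\mapsto H(z,x)$ is strictly increasing on $[-\rho,\rho]$. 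Hence $H(z,u_\lambda)\geq H(z,y_\lambda)$ a.e.\ in $\Omega$, with strict inequality on the nonempty open set $\{z\in\Omega:y_\lambda(z)<u_\lambda(z)\}$; indeed this set contains the open nodal set $\{y_\lambda<0\}$, where $u_\lambda>0$ because $u_\lambda\in{\rm int\,}C_+$.

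Next, I apply a strong comparison principle tailored to the weighted $(p,2)$-operator $\mathcal{L}u:=-\Delta_p^{a_1}u-\Delta_2^{a_2}u+\widehat{\xi}_\rho|u|^{p-2}u$. Setting $w=u_\lambda-y_\lambda\in C_0^1(\overline{\Omega})$ with $w\geq 0$, $w\not\equiv 0$, the relation $\mathcal{L}u_\lambda-\mathcal{L}y_\lambda=H(\cdot,u_\lambda)-H(\cdot,y_\lambda)\geq 0$ linearizes, via standard mean-value expansions of $y\mapsto|y|^{p-2}y$ on $\mathbb{R}^N$ and on $\mathbb{R}$, into a linear elliptic problem
\[
-{\rm div}\bigl(A(z)\nabla w\bigr)+c(z)\,w=g(z)\geq 0,\quad g\not\equiv 0,\quad w\big|_{\partial\Omega}=0,
\]
with $c\in L^\infty(\Omega)$ and coefficient matrix $A(z)\geq a_2(z)I\geq c_1 I$ uniformly positive definite, the lower bound coming from the $-\Delta_2^{a_2}$ summand inherent in $q=2$. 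The classical strong maximum principle together with Hopf's boundary point lemma then yield $w>0$ in $\Omega$ and $\partial w/\partial n<0$ on $\partial\Omega$, that is, $u_\lambda-y_\lambda\in{\rm int\,}C_+$. The symmetric argument applied to $y_\lambda-v_\lambda$ yields the second containment, and Theorem \ref{th11} follows.

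The main obstacle is precisely the strong comparison step: for the pure $p$-Laplacian the linearization of $y\mapsto|y|^{p-2}y$ produces a matrix that degenerates whenever $\nabla u_\lambda$ or $\nabla y_\lambda$ vanishes, so no such SCP is directly available in that setting. The $(p,2)$-structure rescues the argument because the linear weighted term $-\Delta_2^{a_2}$ adds the uniformly positive-definite contribution $a_2(z)I\geq c_1 I$ to the linearized coefficient matrix, restoring ellipticity regardless of critical points of $u_\lambda$ and $y_\lambda$ and allowing one to invoke classical linear maximum principle theory. This is exactly why the refined conclusion of Theorem \ref{th11} is confined to $q=2$ and requires the additional monotonicity hypothesis $H_1^\prime(vi)$.
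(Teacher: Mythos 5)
Your proposal is correct and follows essentially the same route as the paper: both arguments hinge on the perturbed monotonicity $H_1^\prime(vi)$ to get the ordered differential inequality for $-\Delta_p^{a_1}-\Delta_2^{a_2}+\widehat{\xi}_\rho|\cdot|^{p-2}(\cdot)$, and on the fact that the $q=2$ term makes the linearized coefficient matrix satisfy $(\nabla_y a(z,y)\xi,\xi)\geq c_1|\xi|^2$, so that strong comparison and the Hopf boundary lemma apply despite the degeneracy of the $p$-part. The only difference is presentational: the paper outsources your explicit mean-value linearization to the tangency principle of Pucci--Serrin and to \cite[Proposition 3.2]{Ref6}, which are proved by exactly the computation you carry out by hand.
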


\begin{proof}
	We start from the solutions provided by Theorem \ref{th10}, namely $u_\lambda \in {\rm int \,} C_+$, $v_\lambda \in -{\rm int \,} C_+$ and $y_\lambda \in  [v_\lambda,u_\lambda] \cap C_0^1(\overline{\Omega})$ nodal. 
	
	Let $a(z,y)=a_1(z)|y|^{p-2}+a_2(z)y$ for all $z \in \Omega$, all $y \in \mathbb{R}^N$. Thus
	${\rm div \, }a(z,\nabla u)=\Delta_p^{a_1}u+\Delta^{a_2}u$ for all $u \in W_0^{1,p}(\Omega)$. Observe $a(z,\cdot)\in C^1(\mathbb{R}^N,\mathbb{R}^N)$ (recall that $2<p$ here) and 
	\begin{align*}
		& \nabla_y a(z,y)=a_1(z)|y|^{p-2}\left[{\rm id \,}+ (p-2)\frac{y \otimes y}{|y|^2}\right]+a_2(z)	\,{\rm id }\\  \Rightarrow \quad & (\nabla_y a(z,y)\xi,\xi)\geq c_1|\xi|^2
		\quad \mbox{for all $y,\xi \in \mathbb{R}^N$.}
	\end{align*}
	
	Also, if $\rho=\max\{\|v_\lambda\|_\infty, \|u_\lambda\|_\infty\}$ and $\widehat{\xi}_\rho>0$ is taken from $H_1^\prime(vi)$, then
	$$f(z,x)-f(z,u)\geq -\widehat{\xi}_\rho|x-u| \quad \mbox{for all $x,u \in [-\rho,\rho]$.}$$
	
	The tangency principle  (Pucci and Serrin \cite[Theorem 2.5.2]{Ref21}) leads to
	\begin{equation}
		\label{eq53}v_\lambda(z)<y_\lambda(z)<u_\lambda(z)\quad \mbox{for all $z \in \Omega$.}
	\end{equation}
	
	Then we have
	\begin{align}
		-\Delta_p^{a_1}y_\lambda -\Delta^{a_2}y_\lambda+\widehat{\xi}_\rho|y_\lambda|^{p-2}y_\lambda \nonumber & = \lambda y_\lambda+ f(z,y_\lambda)+\widehat{\xi}_\rho |y_\lambda|^{p-2}y_\lambda\\ \nonumber & \leq\lambda  u_\lambda+f(z,u_\lambda)+ \widehat{\xi}_\rho u_\lambda^{p-1}\quad \mbox{(see \eqref{eq53} and   $H_1^\prime(vi)$)}\\ \label{eq54}
		& = -\Delta_p^{a_1}u_\lambda-\Delta^{a_2}u_\lambda+\widehat{\xi}_\rho u_\lambda^{p-1}.
	\end{align}
	
	On account of \eqref{eq53} we have
	$0 \preceq \lambda [u_\lambda-y_\lambda].$
	Returning to \eqref{eq54}, we obtain
	$u_\lambda-y_\lambda\in {\rm int \,}C_+$ (by Gasi\'{n}ski et al. \cite[Proposition 3.2]{Ref6}). On the other side, one can establish that $y_\lambda-v_\lambda\in {\rm int \,}C_+.$ We deduce that  $y_\lambda \in {\rm int \,}_{C_0^1(\overline{\Omega})}[v_\lambda,u_\lambda].$
\end{proof}	

Finally under assumption $H_1^{\prime\prime}$ we can have a nonexistence result.

\begin{theorem}
	\label{th12} Let $H_0$, $H_1^{\prime\prime}$ be satisfied,  and $\lambda < \widehat{\lambda}_1(q,a_2)$. Then,  \eqref{eq0} does not admit nontrivial solution. 
\end{theorem}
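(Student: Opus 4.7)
The plan is to argue by contradiction via a single energy estimate in which both principal eigenvalues appear naturally. Suppose $u \in W_0^{1,p}(\Omega) \setminus \{0\}$ is a nontrivial weak solution of \eqref{eq0}. Since $p > q$ and $\Omega$ is bounded, we have $W_0^{1,p}(\Omega) \hookrightarrow W_0^{1,q}(\Omega)$, so $u$ is a legitimate test function for both differential operators and lies in $L^q(\Omega)$.

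Testing the weak formulation of \eqref{eq0} against $h = u$ itself yields
$$\int_\Omega a_1(z)|\nabla u|^p\, dz + \int_\Omega a_2(z)|\nabla u|^q\, dz = \lambda \|u\|_q^q + \int_\Omega f(z,u)\, u\, dz.$$
The next step is to apply $H_1''(vi)$ pointwise to bound $f(z,u)u \leq \widehat{\lambda}_1(p,a_1)|u|^p$, and then use the variational characterization \eqref{eq1} of $\widehat{\lambda}_1(p,a_1)$ to get $\widehat{\lambda}_1(p,a_1)\|u\|_p^p \leq \int_\Omega a_1(z)|\nabla u|^p\, dz$. Subtracting this inequality from the energy identity makes the $p$-weighted terms cancel, leaving
$$\int_\Omega a_2(z)|\nabla u|^q\, dz \leq \lambda \|u\|_q^q.$$

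Finally, applying \eqref{eq1} again, this time to the operator $(-\Delta_q^{a_2}, W_0^{1,q}(\Omega))$, gives
$$\widehat{\lambda}_1(q,a_2) \|u\|_q^q \leq \int_\Omega a_2(z)|\nabla u|^q\, dz \leq \lambda \|u\|_q^q.$$
Since $\lambda < \widehat{\lambda}_1(q,a_2)$, this forces $\|u\|_q = 0$, whence $u \equiv 0$ in $\Omega$, contradicting the assumed nontriviality of $u$.

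There is no genuine obstacle here: the argument is a one-shot energy estimate, and $H_1''(vi)$ is tailored precisely to allow the $(-\Delta_p^{a_1})$-part of the equation to absorb the nonlinearity $f$, leaving only the subcritical $(-\Delta_q^{a_2})$-part, which is then ruled out by the strict inequality $\lambda < \widehat{\lambda}_1(q,a_2)$. The only point that deserves a brief remark is that the growth control \eqref{eq5} guarantees $f(z,u)u \in L^1(\Omega)$, so all integrals above are finite and the manipulations are rigorous.
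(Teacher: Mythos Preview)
Your proof is correct and follows exactly the same one-shot energy estimate as the paper: test the weak formulation with $h=u$, invoke $H_1''(vi)$ to absorb the $f$-term into the $\widehat{\lambda}_1(p,a_1)$ part, and then use the variational characterization of $\widehat{\lambda}_1(q,a_2)$ to reach the contradiction $\widehat{\lambda}_1(q,a_2)\|u\|_q^q \le \lambda\|u\|_q^q$. If anything, your write-up is slightly more complete than the paper's, which (as written) postulates $u \in \mathcal{S}_\lambda^+$ and concludes only $\mathcal{S}_\lambda^+=\emptyset$, whereas you handle an arbitrary nontrivial solution; the underlying computation is identical in both cases.
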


\begin{proof}
	At the beginning we postulate the existence of $u \in  \mathcal{S}_\lambda^+ \subseteq {\rm int \,}C_+$ so that
	$$\langle A_{p}^{a_1}(u),h \rangle + \langle A_{q}^{a_2}(u),h \rangle = \int_\Omega [\lambda |u|^{q-2}u+ f(z,u)]h dz \quad \mbox{for all }  h \in W_0^{1,p}(\Omega).
	$$
	
	For $h=u \in W_0^{1,p}(\Omega)$, by $H_1^{\prime\prime}(vi)$ we deduce that
	$$ \int_\Omega a_1(z)|\nabla  u|^pdz-\widehat{\lambda}_1(p,a_1)\|u\|^p_p+\int_\Omega a_2(z)|\nabla u|^qdz-\lambda\|u\|^q_q\leq 0,$$ which implies $[ \widehat{\lambda}_1(q,a_2) -\lambda] \|u\|^q_q\leq 0$,
	a contradiction since $\lambda<\widehat{\lambda}_1(q,a_2)$. Therefore $\mathcal{S}_\lambda^+ =\emptyset$ for all $\lambda<\widehat{\lambda}_1(q,a_2)$.
\end{proof}

\begin{remark}
	For $(p,q)$-equations with no weights but with variable exponents we refer to the survey paper of R\u{a}dulescu \cite{Ref22}.
\end{remark}

\section*{Acknowledgments}
The authors would like to thank Nikolaos S. Papageorgiou
for proposing the problems and providing important comments and suggestions. 
The first author was supported by Slovenian Research Agency
grants P1-0292, N1-0114, and N1-0083.

\end{document}